\newtheorem{definition}{Definition}
\newtheorem{question}{Question}
\newtheorem{theorem}{Theorem}[section]
\newtheorem{corollary}[theorem]{Corollary}
\newtheorem{proposition}[theorem]{Proposition}
\newtheorem{varexample}[theorem]{Example}
\newcommand{\PP}{\mathbb{P}\,}
\newcommand{\LL}{\mathcal{L}\,}
\newcommand{\OO}{\mathcal{O}\,}
\newcommand{\Z}{\mathbb{Z}\,}
\begin{document}
\title{Birational Contractions of $\overline{M}_{3,1}$ and $\overline{M}_{4,1}$}
\author{David Jensen}
\date{}
\bibliographystyle{alpha}
\maketitle

\begin{abstract}

We study the birational geometry of $\overline{M}_{3,1}$ and $\overline{M}_{4,1}$.  In particular, we pose a pointed analogue of the Slope Conjecture and prove it in these low-genus cases.  Using variation of GIT, we construct birational contractions of these spaces in which certain divisors of interest -- the pointed Brill-Noether divisors -- are contracted.  As a consequence, we see that these pointed Brill-Noether divisors generate extremal rays of the effective cones for these spaces.

\end{abstract}

\section{Introduction}

The moduli spaces of curves are some of the most studied objects in algebraic geometry.  In recent years, a great deal of progress has been made on understanding the birational geometry of these spaces.  Examples include the work of Hassett and Hyeon on the minimal model program for $\overline{M}_g$ \cite{HH1} \cite{HH2} and the discovery by Farkas of previously unknown effective divisors on $\overline{M}_g$ \cite{Farkas}.  Nevertheless, many fundamental questions remain open.

Many of these questions can be stated in terms of the cone of effective divisors $\overline{NE}^1 ( \overline{M}_g )$. Among the first to study this cone were Eisenbud, Harris and Mumford in a series of papers proving that $\overline{M}_g$ is of general type for $g \geq 24$ \cite{HMum} \cite{HE}.  A key element of these proofs is the computation of the class of certain divisors on $\overline{M}_g$.  The original paper of Harris and Mumford focused on the $k$-gonal divisor in $\overline{M}_{2k-1}$, a specific case of the more general class of Brill-Noether divisors.  In their argument, they use this calculation to show that the canonical class can be written as an effective sum of a Brill-Noether divisor, boundary divisors, and an ample divisor, and hence lies in the interior of $\overline{NE}^1 ( \overline{M}_g )$.  The search for effective divisors with this property eventually led to the Harris-Morrison Slope Conjecture.

In their work, Harris and Eisenbud discovered that all of the Brill-Noether divisors lie on a single ray in $\overline{NE}^1 ( \overline{M}_g )$.   One consequence of the Slope Conjecture would be that this ray is extremal.  The Slope Conjecture has recently been proven false in \cite{FP} and subsequently in \cite{Farkas}, but the statement is known to hold for certain small values of $g$.  In several of these cases, the statement can be proved by use of the Contraction Theorem, which states that the set of exceptional divisors of a birational contraction $X \dashrightarrow Y$ span a simplicial face of $\overline{NE}^1 (X)$ (see \cite{Rulla}). In other words, the Slope Conjecture has been shown to hold for small values of $g$ by constructing explicit birational models for the moduli space in which the Brill-Noether divisor is contracted.  Moreover, these models arise naturally as geometric invariant theory quotients.

The purpose of this paper is to carry out a pointed analogue of the discussion above in some low genus cases. In \cite{Logan}, Logan introduced the notion of \textbf{pointed Brill-Noether divisors}.

\begin{definition}
Let $Z = (a_0 , \ldots , a_r )$ be an increasing sequence of nonnegative integers with $\alpha = \sum_{i=0}^r (a_i - i)$.  Let $BN^r_{d,Z}$ be the closure of the locus of pointed curves $(p,C) \in M_{g,1}$ possessing a $g^r_d$ on $C$ with vanishing sequence $Z$ at $p$.  When $g+1 = (r+1)(g-d+r)+ \alpha$, this is a divisor in $\overline{M}_{g,1}$, called a \textbf{pointed Brill-Noether divisor}.
\end{definition}

Logan's original motivation was to prove a pointed version of the Harris-Mumford general type result.  In this setting, it is natural to consider an analogue of the Slope Conjecture:

\begin{question}
Is there an extremal ray of $\overline{NE}^1 ( \overline{M}_{g,1} )$ generated by a pointed Brill-Noether divisor?
\end{question}

We consider this question in certain low-genus cases.  When $g=2$, this question was answered in the affirmative by Rulla \cite{Rulla}.  He shows that the Weierstrass divisor $BN^1_{2,(0,2)}$ generates an extremal ray of $\overline{NE}^1 ( \overline{M}_{2,1} )$ by explicitly constructing a birational contraction of $\overline{M}_{2,1}$.  Our main result is an extension of this to higher genera:

\begin{theorem}

There is a birational contraction of $\overline{M}_{3,1}$ contracting the Weierstrass divisor $BN^1_{3,(0,3)}$.  Similarly, there is a birational contraction of $\overline{M}_{4,1}$ contracting the pointed Brill-Noether divisor $BN^1_{3,(0,2)}$.

\end{theorem}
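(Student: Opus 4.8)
\medskip
\noindent\textbf{Proof proposal.}

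The plan is to realize each moduli space as the birational image of a geometric invariant theory quotient, extending Rulla's genus-$2$ construction. For $\overline{M}_{g,1}$ with $g \in \{3,4\}$ I would work on the Hilbert scheme $H$ of canonically embedded genus-$g$ curves in $\mathbb{P}^{g-1}$ carrying a marked point, realized concretely as pointed plane quartics when $g=3$ and as pointed $(2,3)$-complete intersections when $g=4$. The group $G = PGL_g$ acts on $H$, and $\dim H - \dim G = 3g-2 = \dim \overline{M}_{g,1}$. There are two natural $G$-linearized classes on $H$: the Hilbert point of the curve and the class recording the marked point. Balancing them gives a one-parameter family of linearizations $L_t$, $t \in \mathbb{Q}_{>0}$, and Mumford's theory yields quotients $Y_t = H /\!\!/_{L_t} G$ that are constant on chambers and joined across finitely many walls by birational transformations, i.e. a variation-of-GIT picture.

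Next I would show that for $t$ in a suitable chamber there is a birational map $\phi_t : \overline{M}_{g,1} \dashrightarrow Y_t$ that is a birational contraction. Over the locus of smooth non-hyperelliptic (resp.\ smooth trigonal curves lying on a smooth quadric) the marked canonical curve recovers $(p,C)$ up to $G$, so $\phi_t$ is an isomorphism of dense opens and hence birational. To check it is a contraction -- $\phi_t^{-1}$ extracting no divisor, as required for the Contraction Theorem -- I would verify that every divisor of $Y_t$ is dominated by a divisor of $\overline{M}_{g,1}$: one takes the GIT-semistable replacement of a generic pointed curve in each boundary divisor and confirms that it stays generic in its stratum, so that no exceptional divisor is created by $\phi_t^{-1}$.

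The crux is to analyze GIT stability along the relevant pointed Brill-Noether locus and show that $\phi_t$ collapses it. For the wall value of $t$ I expect a pointed curve whose marked point is a Weierstrass point (a flex, where a line meets the quartic to order $3$) in the $g=3$ case, or a ramification point of the trigonal map (where a ruling of the quadric is tangent to $C$) in the $g=4$ case, to fail to be GIT-stable: the extra tangency at $p$ forces the marked point into a distinguished position, so a one-parameter subgroup adapted to $p$ destabilizes the configuration and its limit lands in a lower-dimensional stratum. Consequently a positive-dimensional family of pointed curves on $BN^1_{3,(0,3)}$ (resp.\ $BN^1_{3,(0,2)}$) becomes $S$-equivalent, the image $\phi_t(BN^1)$ has codimension at least $2$ in $Y_t$, and the divisor is contracted.

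The main obstacle is precisely this stability computation. It requires applying the Hilbert--Mumford numerical criterion to every destabilizing one-parameter subgroup at the wall, and then matching the collapsing stratum exactly with the pointed Brill-Noether divisor -- neither a larger locus (which would destroy birationality) nor a smaller one (which would fail to contract the whole divisor). I would also need to confirm that the $Y_t$ are normal and projective and that $\phi_t$ is defined in codimension one, so that its exceptional divisors are well defined and the Contraction Theorem applies, yielding the extremality of the Brill-Noether ray advertised in the introduction.
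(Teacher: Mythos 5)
Your overall strategy is the paper's: realize $\overline{M}_{g,1}$ birationally as a GIT quotient of a family of pointed canonical curves, vary the linearization between the curve factor and the marked-point factor, and arrange for the pointed Brill--Noether divisor to be collapsed at a wall. For $g=3$ your setup (pointed plane quartics, $PGL_3$, a single slope parameter) is literally the paper's, and your guess for the destabilized locus -- marked flexes for $g=3$, marked points where a ruling is tangent for $g=4$ -- is exactly what the paper proves (these are its Hessian loci $H_1$ and $H_{0,1}$). The first genuine divergence is the $g=4$ parameter space: the paper does \emph{not} take $(2,3)$ complete intersections in $\PP^3$ under $PGL_4$, but fixes the smooth quadric $Y = \PP^1 \times \PP^1$ and quotients the universal family over $\vert \OO (3,3) \vert$ by the subgroup $PSO(4,\mathbb{C})$. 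This is a real simplification, not a cosmetic one: in your space the cubic generator is only defined modulo the quadric, the quadric itself degenerates (curves on the quadric cone form the Petri/semi-canonical-pencil locus), and the $PGL_4$-stability analysis must track both degenerations at once. Nothing in your sketch confronts this, so for $g=4$ your route is substantially harder than the one the paper takes, with no evidence the wall structure stays tractable.

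The more fundamental gap is that everything you defer as ``the main obstacle'' or flag with ``I expect'' \emph{is} the proof; the deferred steps are not routine and the paper needs specific tools to carry them out. Concretely: (i) the walls must be located exactly ($t = d-2$ and $t_0 = d - \frac{9}{4}$ for $\PP^2$; $t = d-1$ and $t_0 = d - \frac{4}{3}$ for $\PP^1 \times \PP^1$), and while one-parameter subgroups suffice to prove \emph{in}stability, certifying \emph{semi}stability of everything else requires either controlling all $1$-PS degenerations or exhibiting nonvanishing $G$-invariant sections; the paper builds exactly such sections (the Wronskians/Hessians $W_1$, $W_2'$, $W_{0,1}$, $W_{1,1}$ of its Section 3), and your proposal has no substitute for this tool. (ii) One must show the strictly semistable locus $X^0$ at the wall is a \emph{single closed orbit} with stabilizer $\mathbb{C}^*$ (a cuspidal cubic plus copies of its tangent cone for $\PP^2$; a $(1,2)$-curve plus rulings for $\PP^1\times\PP^1$), so that Thaddeus' wall-crossing theorem applies. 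This is what simultaneously shows that the image of the Brill--Noether divisor is a single point (your codimension $\geq 2$ claim is an assertion until the dimension of $X^0 // G$ is computed) and that the morphism $X //_{t_0 - \epsilon} G \to X //_{t_0} G$ is an isomorphism \emph{away} from the contracted divisor -- the latter being what makes the composite a birational contraction. Your plan of checking ``semistable replacements of generic boundary curves'' does not deliver that isomorphism statement. (iii) The map $\overline{M}_{g,1} \dashrightarrow X //_{t_0 - \epsilon} G$ must itself be shown to be a contraction; the paper does this by constructing an inverse morphism on an open set whose complement has codimension $\geq 2$, using that the locus of non-moduli-stable points sits strictly inside the irreducible discriminant hypersurface $\Delta$. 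So your outline points in the right direction and correctly anticipates the shape of all three steps, but executes none of them, and for (i) and (ii) it lacks the mechanisms (invariant Hessian sections; the single-orbit-plus-$\mathbb{C}^*$ verification feeding Thaddeus' theorem) that make the paper's computation work.
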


As a consequence, we identify an extremal ray of the effective cone.

\begin{corollary}

For $g = 3,4$, there is an extremal ray of $\overline{NE}^1 ( \overline{M}_{g,1} )$ generated by a pointed Brill-Noether divisor.

\end{corollary}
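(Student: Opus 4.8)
The plan is to deduce this directly from the Theorem by invoking the Contraction Theorem recalled in the introduction. By the Theorem, for each $g = 3, 4$ there is a birational contraction $\phi \colon \overline{M}_{g,1} \dashrightarrow Y$ that contracts the relevant pointed Brill-Noether divisor $D$ — the Weierstrass divisor $BN^1_{3,(0,3)}$ when $g = 3$, and $BN^1_{3,(0,2)}$ when $g = 4$. The first step is to record that $D$ is genuinely an \emph{exceptional} divisor of $\phi$, i.e.\ that it is carried to a locus of strictly smaller dimension in $Y$; this is precisely the meaning of the word ``contracting'' in the statement of the Theorem, so it requires no additional argument.

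The second step is to apply the Contraction Theorem (see \cite{Rulla}), which asserts that the collection of exceptional divisors of a birational contraction spans a simplicial face $F$ of $\overline{NE}^1(\overline{M}_{g,1})$. Since $D$ is one of the generators of the simplicial cone $F$, it spans a one-dimensional face (an extremal ray) of $F$. Because $F$ is itself a face of $\overline{NE}^1(\overline{M}_{g,1})$, and a face of a face is again a face of the ambient cone, the ray $\mathbb{R}_{\geq 0}[D]$ is a face of the full effective cone. Hence $[D]$ generates an extremal ray of $\overline{NE}^1(\overline{M}_{g,1})$, which is exactly the assertion of the Corollary. The argument is uniform in $g = 3, 4$, the only difference being which Brill-Noether class plays the role of $D$.

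I do not expect a serious obstacle here, as the Corollary is a formal consequence of the Theorem combined with the Contraction Theorem; the substantive work lies entirely in constructing the contractions via variation of GIT, which is the content of the Theorem itself. The single point demanding care is the passage from ``contracted divisor'' to ``extremal ray'': this rests on the general fact that each generator of a simplicial face is extremal in the ambient cone. If $\phi$ happens to contract more than one divisor, one must confirm that $D$ is among the exceptional divisors and that the full collection spans a simplicial cone — but both of these are delivered automatically by the Contraction Theorem applied to the contraction furnished by the Theorem, so no case analysis beyond identifying the exceptional locus of $\phi$ should be needed.
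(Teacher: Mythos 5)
Your proposal is correct and follows exactly the route the paper intends: the Corollary is the formal consequence of the main Theorem together with Rulla's Contraction Theorem, with the key observation being that the full set of exceptional divisors spans a simplicial face, so the pointed Brill--Noether divisor — being one of its generators — spans a face of a face, hence an extremal ray. This matches the paper, whose section-ending theorems make the same deduction (while additionally identifying the other contracted divisors, e.g.\ $BN^1_2$ and the $\Delta_i$ in genus $3$, to exhibit the larger simplicial face explicitly — a refinement not needed for the Corollary itself).
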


The proof uses variation of GIT.  In particular, we consider the following GIT problem:  let $Y$ be a surface and fix a linear equivalence class $\vert D \vert$ of curves on $Y$.  Now, let
$$ X = \{ (p,C) \in Y \times \vert D \vert \text{  } \vert p \in C \} $$
be the universal family over this space of curves.  In the case where $(Y, \vert D \vert )$ is $( \PP^2 , \vert \OO (4) \vert )$ or $( \PP^1 \times \PP^1 , \vert \OO (3,3) \vert )$, the quotient of $X//Aut(Y)$ is a birational model for $\overline{M}_{3,1}$ or $\overline{M}_{4,1}$, respectively.  By varying the choice of linearization, we obtain a birational model in which the specified divisor is contracted.

The outline of the paper is as follows.  In section 2 we provide some background on variation of GIT.  In section 3, we develop a tool for studying GIT quotients of families of curves on surfaces.  In particular, we construct a large class of divisors on these spaces that are invariant under the automorphism group of the surface, called Hessians.  In sections 4 and 5 we then examine separately curves on $\PP^2$ and on $\PP^1 \times \PP^1$, yielding our result in the cases of $g = 3$ and 4.

We plan on discussing similar results for genus 5 and 6 in a later paper.

\textbf{Acknowledgements}
This work was prepared as part of my doctoral dissertation under the direction of Sean Keel.  I would like to thank him for his abundance of help and suggestions.  I would also like to thank Brendan Hassett for his ideas.

\section{Variation of GIT}

The birational contractions that we construct arise naturally as GIT quotients.  This section contains a brief summary of results of Dolgachev-Hu \cite{DH} and Thaddeus \cite{Thaddeus} on variation of GIT.

Given a group $G$ acting on a variety $X$, the GIT quotient $X//G$ is not unique -- it depends on the choice of a $G$-ample line bundle.  In particular, if $\LL \in Pic^G (X)$, we have
$$ X //_{ \LL } G = Proj \bigoplus_{n \geq 0} H^0 (X, \LL^{\otimes n} )^G . $$
Following Dolgachev and Hu, we will call the set of all $G$-ample line bundles the \textbf{$G$-ample cone}.  A study of how the quotient varies with the choice of the $G$-ample line bundle was carried out independently by Dolgachev-Hu \cite{DH} and Thaddeus \cite{Thaddeus}.  The following theorem is a summary of some of the results of those papers:

\begin{theorem} \cite{DH} \cite{Thaddeus}
The $G$-ample cone is divided into a finite number of convex cones, called \textbf{chambers}.  Two line bundles $\LL$ and $\LL '$ lie in the same chamber if $X^s (\LL ) = X^{ss} (\LL ) = X^{ss} (\LL ') = X^s (\LL ')$.  The chambers are bounded by a finite number of \textbf{walls}.  A line bundle $\LL$ lies on a wall if $X^{ss}(\LL ) \neq X^s (\LL )$.  If $\LL$ lies on a wall and $\LL '$ lies is an adjacent chamber, then there is a morphism $X//_{\LL'}G \to X//_{\LL}G$.  This map is an isomorphism over the stable locus.
\end{theorem}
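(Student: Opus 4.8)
The plan is to reduce everything to Mumford's numerical criterion and to exploit the fact that the Hilbert--Mumford weight is linear in the linearization. Recall that for an ample $G$-linearized line bundle $\LL$, a point $x \in X$, and a one-parameter subgroup $\lambda \colon \mathbb{G}_m \to G$, one forms the limit $x_0 = \lim_{t \to 0} \lambda(t) \cdot x$, a fixed point of $\lambda$, and sets $\mu^{\LL}(x,\lambda)$ equal to the weight of the induced $\mathbb{G}_m$-action on the fiber $\LL_{x_0}$. The numerical criterion states that $x \in X^{ss}(\LL)$ if and only if $\mu^{\LL}(x,\lambda) \geq 0$ for every $\lambda$, and $x \in X^{s}(\LL)$ if and only if the inequality is strict for every nontrivial $\lambda$. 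The first thing I would record is that $\mu^{\LL}(x,\lambda)$ is additive in $\LL$, so it extends to a function on $NS^G(X) \otimes \mathbb{R}$ that is \emph{linear} in its line-bundle argument; since (semi)stability is insensitive to scaling $\LL$, all of the conditions above are homogeneous in $\LL$. This is the structural fact that turns the problem into the study of a linear arrangement on the cone of $G$-ample classes.

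Next I would establish the finiteness of walls. Fixing a maximal torus $T \subseteq G$, conjugation-invariance of the weight lets me restrict attention to $\lambda$ landing in $T$. For such $\lambda$, the value $\mu^{\LL}(x,\lambda)$ depends on $x$ only through the connected component $F$ of the fixed locus $X^T$ containing $x_0$, because the $T$-weight of $\LL$ is locally constant on fixed components. As $F$ ranges over the finitely many components of $X^T$ and the relevant weights are bounded, only finitely many distinct linear functionals $\LL \mapsto \mu^{\LL}(x,\lambda)$ arise. Each such nonzero functional defines, via $\mu^{\LL}(x,\lambda)=0$, a rational hyperplane through the origin, and I would take the \textbf{walls} to be the traces of these finitely many hyperplanes on the $G$-ample cone, with the \textbf{chambers} the connected components of the complement. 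On a chamber the sign of every functional $\mu^{\bullet}(x,\lambda)$ is constant, so by the numerical criterion $X^{ss}(\LL)$ and $X^s(\LL)$ do not change as $\LL$ moves within the chamber; moreover, since $\LL$ lies in no wall, no $\mu^{\bullet}(x,\lambda)$ vanishes at $\LL$, which forces $X^{ss}(\LL) = X^s(\LL)$ there. This yields the chamber description in the statement, and the convexity of chambers is immediate since each is a connected component of a convex cone cut by a finite hyperplane arrangement.

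Finally I would construct the wall-crossing morphism. Let $\LL_0$ lie on a wall and $\LL'$ in an adjacent chamber. Writing $\LL'$ as a small perturbation of $\LL_0$ into the chamber and comparing the signs of the finitely many functionals, I would check that every functional nonnegative at $\LL_0$ remains nonnegative (indeed those vanishing on the wall become strictly positive), so $X^{ss}(\LL') \subseteq X^{ss}(\LL_0)$, both being $G$-invariant open subsets. The good quotient $\pi_0 \colon X^{ss}(\LL_0) \to X //_{\LL_0} G$ restricts to a $G$-invariant morphism on the open subset $X^{ss}(\LL')$; by the universal property of the good quotient $X^{ss}(\LL') \to X //_{\LL'} G$ this factors uniquely through a morphism $X //_{\LL'} G \to X //_{\LL_0} G$, which is automatically projective. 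Over the locus of $\LL_0$-stable points, where $X^s(\LL_0) \subseteq X^s(\LL') = X^{ss}(\LL')$ and both quotients are geometric, this morphism is an isomorphism.

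I expect the main obstacle to be the finiteness claim in the second paragraph: controlling the boundedness of the weights and showing the arrangement of walls is genuinely finite requires the structure theory of the $T$-action on fixed loci and is where the real content of Dolgachev--Hu and Thaddeus lies. By comparison, the linearity in the first step is formal, and the construction of the morphism in the last step is a clean consequence of the universal property of good quotients once the inclusion of semistable loci is in hand.
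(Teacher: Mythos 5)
The paper itself offers no proof of this theorem: it is quoted explicitly as a summary of results from \cite{DH} and \cite{Thaddeus}, so your proposal can only be measured against the arguments in those references. Your skeleton does match theirs (linearity of the Hilbert--Mumford function in $\LL$, reduction to a maximal torus $T$, a finite hyperplane arrangement defining chambers and walls, and the wall-crossing morphism obtained from an inclusion of semistable loci plus the universal property of the good quotient), and your final paragraph is essentially Thaddeus's construction and is sound once the chamber structure is in place. The genuine gap is exactly where you suspected, but it is worse than a missing boundedness estimate: the finiteness argument is incorrect as stated. A minor point first: $x_0=\lim_{t\to 0}\lambda(t)\cdot x$ lies in $X^{\lambda}$, not in $X^{T}$, so ``the component of $X^T$ containing $x_0$'' need not exist (this is repairable, since the component of $X^{\lambda}$ containing $x_0$ meets $X^T$ and the $\lambda$-weight of $\LL$ is constant on it). The serious point: the functional $\LL\mapsto\mu^{\LL}(x,\lambda)$ is \emph{not} determined by the fixed component $F$ alone; it equals, up to sign, $\langle\chi_F(\LL),\lambda\rangle$, where $\chi_F$ is the linear map recording the $T$-character of $\LL$ on $F$, and $\lambda$ still enters. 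As $\lambda$ runs over the cocharacter lattice, these give infinitely many pairwise non-proportional functionals as soon as $\chi_F$ has rank at least two, and the union of their zero hyperplanes is then \emph{dense} in the $G$-ample cone, so your ``chambers'', defined as connected components of the complement, would be empty. You can see this concretely in the paper's own genus-$3$ setting: the critical slopes $t=(ir_0+jr_1+kr_2)/r_l$ at which $\mu_{\lambda}(x_l,a_{i,j,k})$ vanishes form a dense set of rationals as the weights $(r_0,r_1,r_2)$ vary, yet there are only finitely many walls. What actually produces finiteness in \cite{DH} is different: one shows that only finitely many subvarieties of $X$ occur as fixed loci $X^{\lambda}$ as $\lambda$ ranges over \emph{all} one-parameter subgroups of $T$ (the fixed locus depends only on how $\lambda$ partitions the finite set of $T$-weights of a projective embedding), and walls are attached only to components $F$ of such fixed loci --- via vanishing of the weights of the stabilizer torus of $F$ on $\LL\vert_F$, equivalently where strictly semistable points with closed orbit appear. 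Deducing that this is a finite list of linear conditions is the real content of Dolgachev--Hu, and it is not a formal consequence of the finiteness of components of $X^T$.

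There is also a logical slip in your wall-crossing step. To get $X^{ss}(\LL')\subseteq X^{ss}(\LL_0)$ you assert that every functional nonnegative at $\LL_0$ stays nonnegative at $\LL'$, those vanishing on the wall becoming strictly positive. That premise is false --- a functional vanishing at $\LL_0$ may become strictly negative in the adjacent chamber, and this is precisely the mechanism by which points of $X^{ss}(\LL_0)$ are destabilized when one moves off the wall --- and if it were true it would prove the opposite, generally false, inclusion $X^{ss}(\LL_0)\subseteq X^{ss}(\LL')$. The correct argument is the contrapositive in the other direction: if $\mu^{\LL_0}(x,\lambda)<0$ for some $\lambda$, then by linearity $\mu^{\LL_t}(x,\lambda)<0$ for all $\LL_t$ on the open segment from $\LL_0$ toward $\LL'$ close enough to $\LL_0$; since adjacency means that segment lies in the chamber of $\LL'$, constancy of the semistable locus on chambers forces $x\notin X^{ss}(\LL')$. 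With this repaired, and with the companion inclusion $X^s(\LL_0)\subseteq X^s(\LL')$ (which also needs an argument, e.g.\ continuity of the normalized Hilbert--Mumford function in $\LL$), your factorization through the good quotient and the isomorphism over the stable locus go through as in \cite{Thaddeus}.
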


Both Thaddeus and Dolgachev-Hu examine the maps between quotients at a wall in the $G$-ample cone.  Specifically, let $\LL_+$, $\LL_-$ be $G$-ample line bundles in adjacent chambers of the $G$-ample cone, and define $\LL (t) = \LL_+^t \otimes \LL_-^{1-t}$.  Suppose that the line between them crosses a wall precisely at $\LL (t_0 )$.  Following Thaddeus, define
$$X^{\pm} = X^{ss} (\LL_{t_0}) \backslash X^{ss} (\LL_{\mp} )$$
$$X^0 = X^{ss} (\LL_{t_0}) \backslash (X^{ss} (\LL_+) \cup X^{ss} (\LL_-))$$

\begin{theorem}
\label{Thaddeus}

\cite{Thaddeus}  Let $x \in X^0$ be a smooth point of $X$.  Suppose that $G \cdot x$ is closed in $X^{ss}(\LL_{t_0})$ and that $G_x \cong \mathbb{C}^*$.  Then the natural map $X //_{\LL_{ \pm }} G \to X //_{\LL_{t_0}} G$ is an isomorphism outside of $X^{\pm}//_{\LL_{\pm}} G$.  Over a neighborhood of $x$ in $X^0 //_{t_0} G$, $X^{\pm} //_{\LL_{\pm}} G $ are fibrations whose fibers are weighted projective spaces.

\end{theorem}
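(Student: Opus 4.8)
The plan is to reduce this global statement about the three quotients to a local, essentially linear-algebraic computation by means of Luna's \'etale slice theorem. First I would note that the morphisms $X//_{\LL_\pm}G \to X//_{\LL_{t_0}}G$ exist and are isomorphisms over the common stable locus already by the theorem of Dolgachev--Hu and Thaddeus stated above. Hence all the interesting behavior is concentrated over the image of $X^0$ in $X//_{\LL_{t_0}}G$, and it suffices to analyze the picture \'etale-locally near the closed orbit $G\cdot x$.

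Since $x$ is a smooth point of $X$, the orbit $G\cdot x$ is closed in $X^{ss}(\LL_{t_0})$, and $G_x\cong\mathbb{C}^*$ is reductive, Luna's slice theorem applies. It produces a $G_x$-invariant slice through $x$ whose normal space $N=T_x X/T_x(G\cdot x)$ is a $\mathbb{C}^*$-representation, together with a $G$-equivariant \'etale map from $G\times_{G_x}N$ to a neighborhood of $G\cdot x$ sending $[e,0]$ to $x$. Because the GIT quotient of an induced space $G\times_{G_x}N$ by $G$ agrees with the GIT quotient of the fiber $N$ by $G_x$, the three quotients $X//_{\LL_\pm}G$ and $X//_{\LL_{t_0}}G$ are modeled near $x$ by $N//_{\chi_\pm}\mathbb{C}^*$ and $N//_{\chi_0}\mathbb{C}^*$, where $\chi_\bullet$ denotes the character by which $G_x$ acts on the fiber $\LL_\bullet|_x$.

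The key numerical input is that, by the definition of a wall and of the sets $X^0$ and $X^\pm$, the character $\chi_0$ is trivial while $\chi_+$ and $\chi_-$ are nonzero of opposite sign. Decomposing $N=N_+\oplus N_0\oplus N_-$ into positive-, zero-, and negative-weight subspaces, a direct Hilbert--Mumford computation for the rank-one torus $\mathbb{C}^*$ shows that the wall quotient is the affine quotient $\Spec\mathbb{C}[N]^{\mathbb{C}^*}$, in which $x$ maps to the origin; that on the plus side the unstable locus is $N_0\oplus N_-$ and the stable locus is $\{n_+\neq 0\}$; and symmetrically on the minus side. Consequently the fiber of $N//_{\chi_\pm}\mathbb{C}^*\to N//_{\chi_0}\mathbb{C}^*$ over the origin is the weighted projective space $\mathbb{P}(N_\pm)$, with weights given by the $\mathbb{C}^*$-weights on $N_\pm$. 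Matching these local exceptional loci with $X^\pm=X^{ss}(\LL_{t_0})\setminus X^{ss}(\LL_\mp)$ then identifies $X^\pm//_{\LL_\pm}G$, over a neighborhood of the image of $x$ in $X^0//_{t_0}G$, with a weighted projective fibration, which is exactly the assertion.

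The main obstacle I anticipate is not any single step but the bookkeeping needed to make the slice reduction compatible with the whole segment of linearizations at once, so that a single slice $N$ simultaneously models all three quotients, and to pin down that the induced characters $\chi_\pm$ are genuinely of opposite sign. This is precisely where the hypotheses $G_x\cong\mathbb{C}^*$ and $x\in X^0$ do the real work: they force exactly two destabilizing one-parameter subgroups $\pm\,\mathrm{id}$, and hence the clean weighted-projective-space fibers rather than a more general toric fiber. The remaining technical point is to check that these \'etale-local identifications descend to the honest quotients, so that the weighted projective fibration structure holds over an actual neighborhood in $X^0//_{t_0}G$ and not merely formally.
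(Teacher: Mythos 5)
The paper does not actually prove this statement: it is quoted verbatim from Thaddeus's paper on variation of GIT, with the citation \cite{Thaddeus} standing in for the proof. So the only meaningful comparison is with the argument in that cited source, and on that score your outline is essentially the original proof. Thaddeus's analysis of the wall crossing proceeds exactly as you describe: apply Luna's \'etale slice theorem at the closed orbit $G \cdot x$ (this is where smoothness of $x$ and reductivity of $G_x \cong \mathbb{C}^*$ enter), replace $X$ near $G \cdot x$ by $G \times_{G_x} N$ with $N = T_x X / T_x (G \cdot x)$, note that $G_x$ acts on $\LL (t) \vert_x$ by a character that is affine-linear in $t$ and vanishes at $t_0$ (triviality at $t_0$ follows from semistability plus closedness of the orbit via Hilbert--Mumford; nonvanishing on either side is equivalent to $x \in X^0$, whence $\chi_+$ and $\chi_-$ have opposite signs), and then compute the three $\mathbb{C}^*$-quotients of $N$ by weight decomposition, identifying the exceptional fibers with the weighted projective spaces $\PP (N_{\pm})$. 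Your flagging of the two delicate points -- that one slice must model all three quotients, and that the \'etale-local identifications must descend to the quotients -- also matches where the real technical work lies in the source.

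The one place where your reduction has a genuine gap is the first assertion. You claim the isomorphism of $X //_{\LL_{\pm}} G \to X //_{\LL_{t_0}} G$ away from $X^{\pm} //_{\LL_{\pm}} G$ follows ``already'' from the summary theorem stated earlier in the paper, but that theorem only gives an isomorphism over the image of the stable locus $X^s (\LL_{t_0})$. This is strictly weaker: the common locus $X^{ss}(\LL_+) \cap X^{ss}(\LL_-) = X^{ss}(\LL_{t_0}) \setminus (X^+ \cup X^-)$ can contain strictly semistable points (for instance closed orbits whose positive-dimensional stabilizer acts trivially on every $\LL (t)$), about which the summary theorem says nothing; and your Luna-slice analysis cannot fill this in either, since it only sees a neighborhood of $X^0$. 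In Thaddeus's paper this assertion is a separate global statement, proved by an orbit-matching argument: one shows that $X^{ss}(\LL_+) \cap X^{ss}(\LL_-)$ is saturated for all three linearizations, i.e.\ for $y$ in this set the unique closed orbit in $\overline{G \cdot y} \cap X^{ss}(\LL_{t_0})$ again lies in this set and coincides with the closed orbits taken in $X^{ss}(\LL_{\pm})$, so that all three quotients restrict to literally the same good quotient of this common open set. Adding that lemma (or citing it separately) closes the gap; the remainder of your outline is the same route as the source's proof.
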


In order to determine whether a point is (semi)stable, we will make frequent use of Mumford's numerical criterion.  Given a $G$-ample line bundle $\LL$ and a one-parameter subgroup $\lambda : \mathbb{C}^* \to G$, it is standard to choose coordinates so that $\lambda$ acts diagonally on $H^0 (X, \LL )^*$.  In other words, it is given by $diag(t^{a_1}, t^{a_2},  \ldots , t^{a_n} )$.  We will refer to the $a_i$'s as the weights of the $\mathbb{C}^*$ action.  For a point $x \in X$, Mumford defines
$$\mu_{\lambda} (x) = min(a_i \vert x_i \neq 0).$$
Then $x$ is stable (semistable) if and only if $\mu_{\lambda} (x) < 0$ (resp. $\mu_{\lambda} (x) \leq 0$) for every nontrivial 1-parameter subgroup $\lambda$ of $G$ (see Theorem 2.1 in \cite{Mumford}).

\section{Hessians}

Here we set up the GIT problem that appears in sections 4 and 5.  We also identify a collection of $G$-invariant divisors that will be useful for analyzing this problem.

Let $Y$ be a smooth projective surface over $\mathbb{C}$, $\LL '$ an effective line bundle on $Y$, and $Z = \PP H^0(Y, \LL ')$.  Let $$X = \{ (p,C) \in Y \times Z \vert p \in C \}.$$  We denote the various maps as in the following diagram:
$$\xymatrix{
X \ar[d]^f \ar[r]^i & Y \times Z \ar[d]^{\pi_2} \ar[r]^{\pi_1} & Y \\
Z \ar[r]^{id} &Z}$$
If $\LL'$ is base-point free, then $X$ is a projective space bundle over $Y$, so it is smooth and  $PicX \cong PicY \times \Z$.  We will later study the GIT quotients of $X$ by the natural action of $Aut(Y)$.

If $C$ is a curve on $Y$ and $\LL$ is another line bundle on $Y$, then for every point $p \in C$ there are $n+1 = h^0(C, \LL \vert_C )$ different orders of vanishing of sections $s \in H^0(C, \LL \vert_C )$.

\begin{definition}
When written in increasing order, $$ a_0^{\LL}(p) < \cdots < a_n^{\LL}(p)$$ the orders of vanishing are called the \textbf{vanishing sequence} of $\LL$ at $p$. The \textbf{weight} of $\LL$ at $p$ is defined to be $w^{\LL}(p) = \sum_{i=0}^n a_i^{\LL}(p) - i$.  A point is said to be an $\LL$-\textbf{flex} if the weight of $\LL$ at the point is nonzero.
\end{definition}

In other words, $p$ is an $\LL$-flex if the vanishing sequence of $\LL$ at $p$ is anything other than $0 < 1 < \cdots < n$.

\begin{definition}
The \textbf{divisor of $\LL$-flexes} is $\sum_{p \in C}w^{\LL}(p)p$.  It corresponds to a section $W_{\LL}$ of a certain line bundle called the \textbf{Wronskian} of $\LL$.  We say that a curve $H$ on $Y$ is an \textbf{$\LL$-Hessian} if the restriction of $H$ to $C$ is precisely the divisor of $\LL$-flexes.
\end{definition}

Returning to our family of curves $f : X \to Z$ above, suppose that $\LL$ is a line bundle on $Y$ such that the pushforward $f_* ( \pi_1 \circ i )^* \LL$ is locally free of rank $n+1$.  We define a relative $\LL$-Hessian to be a divisor $H \subseteq X$ whose restriction to each fiber is the divisor of $f_* ( \pi_1 \circ i )^* \LL$-flexes.  Relative $\LL$-Hessians were studied by Cukierman \cite{Cukierman}, who shows:

\begin{proposition}
\label{Cukierman}
\cite{Cukierman} The class of the relative $\LL$-Hessian is
$$ (n+1)c_1 ( \pi_1 \circ i)^* \LL + {{n+1}\choose{2}} c_1 \Omega^1_{X/Z} - c_1 f^* f_* ( \pi_1 \circ i )^* \LL .$$
\end{proposition}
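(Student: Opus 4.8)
The plan is to realize the Wronskian as the determinant of a natural bundle map and then compute the first Chern class of its target. Write $\mathcal{M} = (\pi_1 \circ i)^* \LL$ for the line bundle on $X$ restricting to $\LL\vert_C$ on each fiber, and let $P^n_{X/Z}(\mathcal{M})$ denote the bundle of $n$-th order relative principal parts (jets) of $\mathcal{M}$ along $f$. Over the locus of $Z$ parametrizing smooth curves, sending a section to its $n$-jet along the fiber defines a map of rank-$(n+1)$ bundles
$$ w \colon f^* f_* \mathcal{M} \longrightarrow P^n_{X/Z}(\mathcal{M}). $$
First I would check that $w$ is generically an isomorphism: on a general fiber the vanishing sequence of $\LL\vert_C$ at a general point is $0 < 1 < \cdots < n$, so the $n$-jets of a basis of $H^0(C, \LL\vert_C)$ are independent there. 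Consequently $\det w$ is a nonzero section of $\det P^n_{X/Z}(\mathcal{M}) \otimes (\det f^* f_* \mathcal{M})^{\vee}$, and this section is the relative Wronskian $W_{\LL}$.

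The second step is the local identification of the zero divisor of $\det w$ with the divisor of flexes. In a relative coordinate $t$ along a smooth fiber and a local trivialization of $\mathcal{M}$, the map $w$ is represented by the classical Wronskian matrix of Taylor coefficients, and its determinant vanishes at $p$ to order exactly $w^{\LL}(p) = \sum_{i=0}^n (a_i^{\LL}(p) - i)$. This is the standard fact that the order of vanishing of the Wronskian equals the total ramification weight of the vanishing sequence, and it shows that $\det w$ cuts out precisely $\sum_{p \in C} w^{\LL}(p)p$ on each fiber, i.e. the relative $\LL$-Hessian $H$. Hence
$$ [H] = c_1 \det P^n_{X/Z}(\mathcal{M}) - c_1 f^* f_* \mathcal{M}. $$

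It then remains to compute $c_1 \det P^n_{X/Z}(\mathcal{M})$. For this I would use the canonical filtration of the principal-parts bundle, whose successive quotients are
$$ \operatorname{gr}^j P^n_{X/Z}(\mathcal{M}) \cong (\Omega^1_{X/Z})^{\otimes j} \otimes \mathcal{M}, \qquad j = 0, 1, \ldots, n, $$
where we use that the fibers are curves, so that $\operatorname{Sym}^j \Omega^1_{X/Z} = (\Omega^1_{X/Z})^{\otimes j}$. Taking first Chern classes additively gives
$$ c_1 \det P^n_{X/Z}(\mathcal{M}) = \sum_{j=0}^n \bigl( j\, c_1 \Omega^1_{X/Z} + c_1 \mathcal{M} \bigr) = (n+1) c_1 \mathcal{M} + \binom{n+1}{2} c_1 \Omega^1_{X/Z}. $$
Substituting $\mathcal{M} = (\pi_1 \circ i)^* \LL$ and combining with the previous display yields the stated formula.

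The main obstacle is the treatment of singular fibers. The jet-bundle filtration above is valid where $f$ is smooth, but the curves in $\vert \OO(4) \vert$ and $\vert \OO(3,3) \vert$ acquire nodes, so strictly one must replace $\Omega^1_{X/Z}$ by the relative dualizing sheaf $\omega_{X/Z}$. Since these sheaves agree away from the nodes, which form a locus of codimension two in $X$, the discrepancy does not affect the divisor class $[H]$ being computed; nonetheless, verifying that the Wronskian section extends across the nodal locus with the asserted vanishing is the genuinely delicate point, and is precisely what Cukierman's analysis of families with nodal fibers provides.
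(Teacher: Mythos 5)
Your argument is correct, and it is essentially the proof the paper points to: the paper does not reprove this proposition but simply cites Cukierman, whose argument is exactly the one you give --- realize the relative Wronskian as the determinant of the jet-evaluation map $f^* f_* \mathcal{M} \to P^n_{X/Z}(\mathcal{M})$ and compute $c_1$ of the target via the principal-parts filtration with graded pieces $(\Omega^1_{X/Z})^{\otimes j} \otimes \mathcal{M}$. Your closing remark also disposes of the singular fibers in the right way: the non-smooth locus of $f$ has codimension two in the smooth total space $X$, so it cannot affect a divisor-class identity, which is all the proposition asserts.
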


In our particular case, we can determine this class more explicitly.

\begin{corollary}
For $X,Y,$ and $Z$ as above, the class of the relative $\LL$-Hessian is
$$ (n+1)c_1 ( \pi_1 \circ i)^* \LL + {{n+1}\choose{2}} ( c_1 \pi_1^* \Omega^1_Y \vert_X + c_1 ( \pi_1 \circ i)^* \LL ' + c_1 f^* \OO_Z (1))$$
$$ - h^0 (Y , \LL \otimes \LL '^* ) ( c_1 f^* \OO_Z (1) ).$$
\end{corollary}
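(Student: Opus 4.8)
The plan is to substitute two explicit computations into the formula of Proposition \ref{Cukierman}: an expression for $c_1 \Omega^1_{X/Z}$ and one for $c_1 f^* f_* ( \pi_1 \circ i )^* \LL$. Both rest on a single observation, that $X$ is the zero locus of the universal section of $\pi_1^* \LL' \otimes \pi_2^* \OO_Z(1)$ on $Y \times Z$. Indeed, a point of $Z = \PP H^0 (Y , \LL')$ is a section of $\LL'$ up to scale, and the tautological inclusion $\pi_2^* \OO_Z(-1) \hookrightarrow H^0(Y, \LL') \otimes \OO_{Y \times Z}$ composed with the pulled-back evaluation $H^0(Y, \LL') \otimes \OO_{Y \times Z} \to \pi_1^* \LL'$ is a section cutting out $X$. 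Hence $\OO_{Y \times Z}(X) = \pi_1^* \LL' \otimes \pi_2^* \OO_Z(1)$, and restricting to $X$ and using $\pi_2 \circ i = f$ gives $c_1 \OO_X(X) = (\pi_1 \circ i)^* c_1 \LL' + f^* c_1 \OO_Z(1)$.

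For the relative cotangent term I would use the relative conormal sequence for the inclusion $i$ of the relative Cartier divisor $X$ in the smooth $Z$-scheme $Y \times Z$,
\[ 0 \to \OO_X(-X) \to (\pi_1 \circ i)^* \Omega^1_Y \to \Omega^1_{X/Z} \to 0, \]
where I have used $\Omega^1_{(Y \times Z)/Z} = \pi_1^* \Omega^1_Y$ and $N^*_{X/(Y \times Z)} = \OO_X(-X)$. Taking first Chern classes and inserting the expression for $c_1 \OO_X(X)$ above yields $c_1 \Omega^1_{X/Z} = c_1 \pi_1^* \Omega^1_Y |_X + c_1 (\pi_1 \circ i)^* \LL' + c_1 f^* \OO_Z(1)$, noting $\pi_1^* \Omega^1_Y|_X = (\pi_1 \circ i)^* \Omega^1_Y$. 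This is exactly the combination appearing with coefficient $\binom{n+1}{2}$ in the statement.

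For the pushforward term I would tensor the structure sequence of $X \subseteq Y \times Z$ by $\pi_1^* \LL$ to obtain
\[ 0 \to \pi_1^* ( \LL \otimes \LL'^* ) \otimes \pi_2^* \OO_Z(-1) \to \pi_1^* \LL \to i_* (\pi_1 \circ i)^* \LL \to 0, \]
using $\OO_{Y \times Z}(-X) = \pi_1^* \LL'^* \otimes \pi_2^* \OO_Z(-1)$. Pushing forward by $\pi_2$ and applying the projection formula identifies $\pi_{2*} \pi_1^* \LL$ with the trivial bundle $H^0(Y, \LL) \otimes \OO_Z$ and $\pi_{2*} ( \pi_1^* (\LL \otimes \LL'^*) \otimes \pi_2^* \OO_Z(-1) )$ with $H^0(Y, \LL \otimes \LL'^*) \otimes \OO_Z(-1)$, while the remaining pushforward is $f_* (\pi_1 \circ i)^* \LL$. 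Taking determinants of the resulting sequence gives $c_1 f_* (\pi_1 \circ i)^* \LL = h^0(Y, \LL \otimes \LL'^*) \, c_1 \OO_Z(1)$; pulling back by $f$ produces the final term, and summing the three contributions recovers the claimed class.

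The step I expect to require the most care is the cohomological bookkeeping in this last computation. The long exact sequence of higher direct images of the structure sequence collapses to the short exact sequence above---so that the determinant isolates $c_1 f_* (\pi_1 \circ i)^* \LL$ with coefficient $h^0$ rather than $\chi$---only once one knows that the relevant higher cohomology vanishes, concretely that $R^{>0} f_* (\pi_1 \circ i)^* \LL = 0$ and $H^{>0}(Y, \LL \otimes \LL'^*) = 0$. I would check these vanishings in the situations of interest, namely curves on $\PP^2$ and on $\PP^1 \times \PP^1$ with the line bundles used in Sections 4 and 5. Alternatively, one can carry out the whole computation in $K(Z)$, where the alternating sums of direct images yield the same identity with $\chi(Y, \LL \otimes \LL'^*)$ in place of $h^0$, after which the stated form follows from the same vanishing.
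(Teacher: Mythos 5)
Your proposal is correct and follows essentially the same route as the paper: the conormal sequence of $X \subseteq Y \times Z$ combined with the identification $\OO_{Y\times Z}(X) = \pi_1^* \LL' \otimes \pi_2^* \OO_Z(1)$ gives $c_1 \Omega^1_{X/Z}$, and pushing forward the twisted structure sequence via the projection formula gives $c_1 f_* (\pi_1 \circ i)^* \LL$, after which one substitutes into Cukierman's formula. Your closing caveat is warranted but slightly stronger than necessary: the only vanishing actually needed is $H^1(Y, \LL \otimes \LL'^*) = 0$ (so that the pushed-forward sequence stays exact on the right), which the paper asserts implicitly and which holds in the applications since $\LL \otimes \LL'^*$ has negative degree(s) on $\PP^2$ and $\PP^1 \times \PP^1$.
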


\begin{proof}
We follow the proof in \cite{Cukierman}.  If $I$ is the ideal sheaf of $X$ in $Z \times Y$, then we have the exact sequence
$$ 0 \to I/I^2 \to \pi_1^* \Omega^1_Y \vert_X \to \Omega^1_{X/Z} \to 0 $$
so we have
$$ c_1 \Omega^1_{X/Z} = c_1 \pi_1^* \Omega^1_Y \vert_X - c_1 I/I^2.$$

Also, $X$ is the scheme of zeros of a section of the line bundle $E = ( \pi_1 \circ i)^* \LL ' \otimes f^* \OO_Z (1)$ on $Y \times Z$.  Note that $I/I^2 \cong E^* \otimes \OO_X = E^* \vert_X$.  It follows that
$$c_1 \Omega^1_{X/Z} = c_1 ( \pi_1 \circ i)^* \Omega^1_Y \vert_X + c_1 E $$
$$= c_1 ( \pi_1 \circ i)^* \Omega^1_Y \vert_X  + c_1 ( \pi_1 \circ i)^* \LL ' + c_1 f^* \OO_Z (1).$$

Now, consider the exact sequence on $Y \times Z$
$$ 0 \to \pi_1^* L \otimes E^* \to \pi_1^* L \to \pi_1^* L \vert_X \to 0 $$
From the projection formula, we see that
$$\pi_{2*}( \pi_1^* \LL \otimes E^* ) = H^0 (Y , \LL \otimes \LL '^* ) \otimes \OO_Z (-1)$$
and $R^1 \pi_{2*} (\pi_1^* L \otimes E^* ) = 0$.  This gives us the exact sequence on $Z$
$$ 0 \to \pi_{2*} ( \pi_1^* L \otimes E^* ) \to \pi_{2*} \pi_1^* L \to \pi_{2*} ( \pi_1^* L \vert_X ) \to 0 $$
Since the middle term is a trivial bundle, the result follows from Proposition \ref{Cukierman}.
\end{proof}

For the remainder of this section, we identify specific examples that will appear in the arguments to follow.

In section 4 we consider the case that $Y = \PP^2$ and $\LL' = \OO_Y (d)$ for some $d \geq 3$.  By the above, we see that for every $m$ and $d$, a relative $\OO_Y (m)$-Hessian $H_m$ exists.  Since $c_1 \pi_1^* \Omega^1_Y \vert_X = \OO_X (-3,0)$, if $m<d$, $H_m$ is cut out by a $G$-invariant section $W_m$ of $$\OO_X ((n+1)m + {{n+1}\choose{2}} (d-3), {{n+1}\choose{2}} ),$$ where $n+1 = h^0 (Y, \LL ) = {{m+2}\choose{2}}$.

In particular, $H_1$ is cut out by a section $W_1 \in H^0( \OO_X(3(d-2),3))$.  $W_1$ vanishes at $(p,C)$ if $C$ is smooth at $p$ and the tangent line to $C$ at $p$ intersects $C$ with multiplicity at least 3, or if $p$ is a singular point of $C$.  Similarly, $H_2$ is defined by a section of $W_2 \in H^0( \OO_X(15d-33, 15))$.  $W_2$ vanishes at $(p,C)$ if $C$ is smooth at $p$ and the osculating conic to $C$ at $p$ intersects $C$ with multiplicity at least 6, or if $p$ is a singular point of $C$.

It is known that $H_2 = H_1 \cup H_2'$ is reducible ( see Proposition 6.6 in \cite{CF}).  Indeed, if a line meets $C$ with multiplicity 3 at $p$, then the double line meets $C$ with multiplicity 6 at $p$.  The points of $H_2' \cap C$ are classically known as the \textbf{sextatic points} of $C$, and $H_2 '$ is cut out by a $G$-invariant section $W_2 '$ of $\OO_X (12(d-\frac{9}{4}),12)$.  A simple calculation shows that $H_2 ' \cap C$ also contains those points of $C$ where $w^{\OO_C (1)} (p) > 1$.  These include singular points and points where the tangent line to $C$ is a \textbf{hyperflex} (a line that intersects $C$ at $p$ with multiplicity $\geq 4$).

Similarly, in section 5 we consider the case that $Y = \PP^1 \times \PP^1$, and $\LL' = \OO_Y (d,d)$.  Note that, for every $(m_1 , m_2 , d)$ with $m_i < d$, a relative $\OO_Y (m_1 , m_2 )$-Hessian $H'_{m_1 ,m_2}$ exists. In this case, our formulas show that the rank of $f_* ( \pi_1 \circ i )^* \OO_Y (m_1 , m_2 )$ is
$$ n+1 = h^0 (\OO_Y (m_1 , m_2 )) = (m_1 + 1)(m_2 + 1). $$
Also, since $c_1 \pi_1^* \Omega^1_Y \vert_X = \OO_X (-2,-2,0)$, we see that $H'_{m_1 ,m_2}$ is cut out by a section $W'_{m_1 ,m_2} \in H^0 ( \OO_X (a_1 , a_2 , b))$ for
$$ a_i = (n+1)m_i + {{n+1}\choose{2}}(d-2) $$
$$ b = {{n+1}\choose{2}}.$$

 Since $\PP^1 \times \PP^1$ has a natural involution, we know that $W'_{m_1 , m_2}$ cannot be $G$-invariant if $m_1 \neq m_2$.  Notice, however, that $W'_{m_1 , m_2} \otimes W'_{m_2 , m_1}$ is a $G$-invariant section of $\OO_X (a, a, b)$ for
$$ n+1 = (m_1 + 1)(m_2 + 1) $$
$$ a = (n+1)(m_1 + m_2 ) + 2{{n+1}\choose{2}}(d-2) $$
$$ b = 2{{n+1}\choose{2}}.$$
We will use $W_{m_1 , m_2}$ to denote the $G$-invariant section described here, and $H_{m_1 ,m_2}$ to denote its zero locus.

In particular, $W_{0,1} \in H^0 (\OO_X (2(d-1),2(d-1),2))$.  It vanishes at a point $(p,C)$ if $C$ intersects one of the two lines through $p$ with multiplicity at least 2 (or, equivalently, if the osculating $(1,1)$ curve is a pair of lines).  Similarly, $W_{1,1} \in H^0 (\OO_X (2(3d-4),2(3d-4),6))$.  It vanishes at a point $(p,C)$ if there is a curve of bidegree $(1,1)$ that intersects $C$ with multiplicity 4 or more at $p$.

\section{Contraction of $\overline{M}_{3,1}$}

In this section, we prove our main result in the genus 3 case:

\begin{theorem}
There is a birational contraction of $\overline{M}_{3,1}$ contracting the Weierstrass divisor $BN^1_{3,(0,3)}$.
\end{theorem}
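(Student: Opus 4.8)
The plan is to realize $\overline{M}_{3,1}$ birationally as a GIT quotient of the universal family of plane quartics and then vary the linearization to contract the Weierstrass divisor. A general genus 3 curve is a smooth plane quartic embedded by its canonical bundle, so the generic fiber of $X \to Z$ over $Z = \PP H^0(\PP^2, \OO(4))$ is a canonically embedded genus 3 curve with a marked point; quotienting the universal family $X = \{(p,C) \in \PP^2 \times Z \mid p \in C\}$ by $G = \mathrm{Aut}(\PP^2) = PGL_3$ therefore gives a birational model of $\overline{M}_{3,1}$. Since $\mathrm{Pic}\,X \cong \mathrm{Pic}\,\PP^2 \times \Z$ is rank two (generated by $\OO_X(1,0)$ pulled back from $\PP^2$ and $\OO_X(0,1) = f^*\OO_Z(1)$), the $G$-ample cone is two-dimensional, so there are only finitely many walls and the variation of GIT is genuinely computable. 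First I would identify the natural linearization whose quotient is $\overline{M}_{3,1}$ itself (up to the usual boundary identifications), then move the linearization across a wall and apply Theorem \ref{Thaddeus} to see that the exceptional locus of the resulting map is contracted.

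The key geometric input is the Hessian machinery from Section 3. The Weierstrass points of a canonical genus 3 curve are exactly its hyperflexes: points where the tangent line meets $C$ with multiplicity $\geq 4$. By the discussion following Proposition \ref{Cukierman}, the locus of such $(p,C)$ sits inside the relative Hessian divisors, specifically in the sextatic/hyperflex component $H_2'$ cut out by the $G$-invariant section $W_2'$. The strategy is to match the Weierstrass divisor $BN^1_{3,(0,3)}$ with the image in the quotient of an explicit $G$-invariant Hessian divisor on $X$, and to show that as the linearization crosses the relevant wall the marked point $p$ is forced to a special position (a hyperflex) that becomes unstable, so that the Weierstrass divisor maps to a lower-dimensional locus. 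Concretely I would use the numerical criterion: choose a one-parameter subgroup $\lambda$ of $PGL_3$ adapted to a hyperflex configuration, compute the weights on the relevant monomials, and show that the semistability of $(p,C)$ changes across the wall precisely along the hyperflex locus.

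The main steps in order are: (1) verify the birationality $X //_{\LL} G \dashrightarrow \overline{M}_{3,1}$ for a linearization $\LL$ in a suitable chamber, checking that stable points correspond to (weighted) pointed curves and that the boundary is handled correctly; (2) locate the walls of the two-dimensional $G$-ample cone by finding the one-parameter subgroups and special configurations $(p,C)$ where strict semistability occurs, using Mumford's criterion; (3) identify which wall-crossing has exceptional locus equal to the Weierstrass/hyperflex divisor, and apply Theorem \ref{Thaddeus} to conclude that this divisor is contracted to a weighted-projective-space fibration of smaller dimension; and (4) confirm that the composite $\overline{M}_{3,1} \dashrightarrow X//_{\LL_+}G \to X//_{\LL_{t_0}}G$ is a birational contraction in the sense required, i.e. that it is birational and does not extract divisors. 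I expect the main obstacle to be step (2)–(3): carefully classifying all the walls and strictly semistable points for the $PGL_3$ action on plane quartics with a marked point, and verifying that one specific wall-crossing isolates the hyperflex locus rather than contracting it together with other unwanted divisors (such as boundary components or the ordinary flex locus $H_1$). Disentangling the sextatic component $H_2'$ from $H_1$, and checking that the destabilized configurations at the relevant wall are exactly the hyperflexes and not, say, nodal or hyperelliptic degenerations, is where the delicate numerical-criterion bookkeeping will be concentrated.
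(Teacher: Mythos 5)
Your overall framework --- GIT on the universal quartic $X \subset \PP^2 \times \PP H^0 (\OO (4))$, the slope parameter for linearizations, Mumford's numerical criterion, the Thaddeus wall-crossing theorem, and finally composing with a birational map $\overline{M}_{3,1} \dashrightarrow X //_{\LL} G$ --- is exactly the paper's. But there is a genuine error in your key geometric identification, and it propagates through your steps (2)--(4). You assert that the Weierstrass points of a canonical genus $3$ curve are its \emph{hyperflexes} (points where the tangent line meets $C$ with multiplicity $\geq 4$), and that $BN^1_{3,(0,3)}$ therefore corresponds to the sextatic component $H_2'$. In fact a point $p$ on a smooth plane quartic $C$ is a Weierstrass point if and only if $h^0 (3p) \geq 2$, which by Riemann--Roch is equivalent to $h^0 (K_C - 3p) \geq 1$, i.e.\ to the existence of a line meeting $C$ at $p$ with multiplicity at least $3$. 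So the Weierstrass divisor is the \emph{ordinary flex} locus, which is exactly the Hessian $H_1$; a general quartic has $24$ ordinary flexes and no hyperflexes. The hyperflex locus is $H_1 \cap H_2'$, which has codimension $2$ in $X$, and its image in $\overline{M}_{3,1}$ likewise has codimension $2$ --- it is not even a divisor, so no wall-crossing can ``contract'' it in the sense the theorem requires.

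This reversal matters because the roles of $H_1$ and $H_2'$ in the wall-crossing are the opposite of what you plan. In the paper, the chamber $(d - \frac{9}{4}, d-2)$ has $X^s (t) = X^{ss} (t) = X \backslash (H_1 \cup S)$; the location of the wall at $t_0 = d - \frac{9}{4}$ is pinned down by the $G$-invariant section $W_2'$ (so $H_2'$ determines \emph{where} the wall is, not \emph{what} gets contracted); at the wall, $H_1 \backslash H_2'$ becomes semistable again, and the map $X //_{t_0 - \epsilon} G \to X //_{t_0} G$ contracts precisely $H_1 \backslash H_2'$ to a point --- this is what contracts the Weierstrass divisor. Your stated worry, that the relevant wall-crossing might contract ``the ordinary flex locus $H_1$'' along with the locus you want, is therefore backwards: contracting $H_1$ is the whole point, and the configurations destabilized at the wall are the hyperflexes, cusps, and the special locus $S$, not the generic Weierstrass points. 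Your parallel concern about contracting boundary components is also unnecessary for the statement at hand: the paper's contraction does contract $\Delta_1$, $\Delta_2$ and the hyperelliptic divisor $BN^1_2$ in addition to $H_1$, and this is harmless (indeed it is what yields the simplicial face of $\overline{NE}^1 ( \overline{M}_{3,1} )$). As written, your plan would send you hunting for a wall that isolates a codimension-two locus, and the argument would break down at your step (3).
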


In order to construct a birational model for $\overline{M}_{3,1}$, we consider GIT quotients of the universal family over the space of plane quartics.  The image of the Weierstrass divisor in this model is precisely the Hessian $H_1$, and we exhibit a GIT quotient in which this locus is contracted.  For most of this section we will consider, more generally, plane curves of any degree $d \geq 3$.

Specifically, following the set-up of the previous section, we let
$$X = \{ (p,C) \in \PP^2 \times \vert \OO (d) \vert \text{  } \vert p \in C \}.$$
Then $\pi_2 : X \to \vert \OO (d) \vert$ is the family of all plane curves of degree $d$.  Our goal is to study the GIT quotients of $X$ by the action of $G = PSL(3,\mathbb{C})$.  By the above, we know that $PicX \cong \Z \times \Z$, so the quotient $X //_{\LL} G$ depends on a single parameter $t$ which we call the \textbf{slope} of $\LL$.

\begin{definition}

We say a line bundle $\LL$ has \textbf{slope} $t$ if $\LL = \pi_1^* \OO (a) \otimes \pi_2^* \OO (b)$ with $t = \frac{a}{b}$.  We write $X^s (t)$ and $X^{ss} (t)$ for the sets of stable and semistable points, and $X //_t G $ for the corresponding GIT quotient.

\end{definition}

Here we describe the numerical criterion for points in $X$.  Let $p = (x_0 , x_1 , x_2 )$ and
$$ C = \sum_{i+j+k=d} a_{i,j,k} x_0^i x_1^j x_2^k.$$
Then a basis for $H^0 ( \OO_X (a,b))$ consists of monomials of the form
$$ \prod_{\alpha = 1}^a x_{l_{ \alpha }} \prod_{\beta = 1}^b a_{i_{\beta},j_{\beta},k_{\beta}}. $$
The one-parameter subgroup with weights $(r_0 , r_1 , r_2 )$ acts on the monomial above with weight
$$ \sum_{\alpha =1}^a r_{l_{\alpha}} - \sum_{\beta = 1}^b ( i_{\beta} r_0 + j_{\beta} r_1 + k_{\beta} r_2 ). $$
In our case, we will only be interested in maximizing or minimizing this weight, so it suffices to consider monomials of the form $x_l^a a_{i,j,k}^b$.  In this case, the one-parameter subgroup acts with weight $ar_l - b(ir_0 + jr_1 + kr_2 )$, which is proportional to
$$ \mu_{\lambda} (x_l , a_{i,j,k} ) := tr_l - (ir_0 + jr_1 + kr_2 ). $$

The $G$-ample cone of $X$ has two edges, one of which occurs when $t=0$.  In the case where $d=4$, we obtain the well-known moduli space of plane quartics.  Descriptions of $X^s (0)$ and $X^{ss} (0)$ appear in \cite{Mumford}, and the quotient $X //_0 G$ plays an important role in the birational geometry of $\overline{M}_3$.  For example, Hyeon and Lee show that this quotient is a log canonical model for $\overline{M}_3$ \cite{HL}, and the space also appears in work on moduli of K3 surfaces \cite{Artebani} and cubic threefolds \cite{CML}.

We will see that, when $t$ is large, stability conditions reflect the inflectionary behavior of linear series at the marked point.  Thus, as $t$ increases, the curve is allowed to have more complicated singularities, but vanishing sequences at the marked point become more well-behaved.

Our first result is to identify the other edge of the $G$-ample cone.  It is determined by the Wronksian $W_1$.

\begin{proposition}

An edge of the $G$-ample cone occurs at $t = d-2$.

\end{proposition}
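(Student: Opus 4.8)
The plan is to identify the second edge as the largest slope for which semistable points exist. Since $\mathrm{Pic}\, X \cong \Z \times \Z$, the $G$-ample cone is two-dimensional and its rays are indexed by the slope $t$; ampleness of $\pi_1^*\OO(a)\otimes\pi_2^*\OO(b)$ forces $a,b>0$, so $t\in(0,\infty)$, and the edge at $t=0$ is precisely where ampleness degenerates. Following Dolgachev--Hu, a $G$-ample class must in addition have nonempty semistable locus, so the remaining edge is the supremum of those $t$ with $X^{ss}(t)\neq\emptyset$. I will show this supremum equals $d-2$ by proving two complementary facts: that $X^{ss}(d-2)\neq\emptyset$, and that every point of $X$ is unstable as soon as $t>d-2$.

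The first fact is immediate from the Hessian theory of the previous section. The Wronskian $W_1$ is a nonzero $G$-invariant section of $\OO_X(3(d-2),3)$, a line bundle of slope $\tfrac{3(d-2)}{3}=d-2$. Any point at which a $G$-invariant section is nonzero is semistable, and $W_1$ is nonzero exactly at pairs $(p,C)$ for which $p$ is a smooth, non-inflectionary point of $C$. Hence $X^{ss}(d-2)$ contains every such pair and is in particular nonempty.

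The heart of the argument is the instability statement for $t>d-2$, which I would establish with Mumford's numerical criterion. Given $(p,C)$, I first use $G=PSL(3,\mathbb{C})$ to move $p$ to $[1:0:0]$, so that $p\in C$ forces $a_{d,0,0}=0$; by the reduction to pure-power monomials recorded above, $\mu_\lambda$ at $(p,C)$ is then controlled by $tr_0-(ir_0+jr_1+kr_2)$ over the monomials with $a_{i,j,k}\neq0$. When $p$ is a smooth point of $C$, I further rotate the tangent line to $\{x_2=0\}$, which forces $a_{d-1,1,0}=0$ while $a_{d-1,0,1}\neq0$, and I test the one-parameter subgroup fixing $p$ and its tangent line, namely the cocharacter with weights $(r_0,r_1,r_2)=(1,0,-1)$ (up to replacing $\lambda$ by $\lambda^{-1}$ to match the sign in the criterion). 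The key combinatorial point is that, because $a_{d,0,0}=a_{d-1,1,0}=0$, the quantity $i-k$ attains its maximum value $d-2$ over the monomials present in $C$, with the maximum realized by $a_{d-1,0,1}$; consequently this subgroup destabilizes $(p,C)$ precisely when $t>d-2$. When instead $p$ is a singular point, the linear terms also vanish and the cocharacter concentrating weight at $p$, with weights $(2,-1,-1)$, already destabilizes for $t>d-3$. Thus no point of $X$ survives once $t>d-2$.

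Combining the two steps, the largest slope carrying semistable points is exactly $d-2$, which is therefore the second edge of the $G$-ample cone. I expect the main obstacle to be the smooth-point case: one must check that normalizing to an incident point--line pair loses no generality (it does not, since $G$ acts transitively on flags and stability is $G$-invariant) and that the chosen cocharacter is sharply adapted to this configuration, i.e. that the bound $\max\{i-k : a_{i,j,k}\neq 0\}=d-2$ is exact. A convenient feature is that we only need to exhibit a single destabilizing subgroup at each point when $t>d-2$, so we avoid the harder problem of describing the full semistable locus along the wall.
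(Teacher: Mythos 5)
Your proposal is correct, and its first half (nonvanishing of the $G$-invariant Wronskian $W_1$, which has slope exactly $d-2$) is identical to the paper's. The second half, however, takes a genuinely different logical route. The paper certifies the edge by working only at $t=d-2$ itself: it shows $X^{ss}(d-2)\neq X^s(d-2)=\emptyset$, and this identifies a boundary ray via Dolgachev--Hu theory (a class in the interior of the $G$-ample cone has stable points whenever stable points exist for \emph{some} linearization, as they do in the adjacent chamber). You instead prove the stronger and more self-contained statement that $X^{ss}(t)=\emptyset$ for every $t>d-2$, so that the cone literally terminates at slope $d-2$; this replaces the appeal to the interior-stability lemma by the definition of $G$-ampleness together with the fact that $\{t : X^{ss}(t)\neq\emptyset\}$ is an interval. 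Interestingly, the computational heart is nearly the same in both arguments: your cocharacter with weights $(1,0,-1)$ adapted to the flag consisting of $p$ and its tangent line is exactly the paper's cocharacter $(-1,0,1)$ after permuting coordinates, and the paper's inequality $\mu_{\lambda}=t+i-k\geq t-(d-2)$ at the normalized point already yields your strict inequality once $t>d-2$. Two small remarks. First, your separate treatment of singular marked points (weights $(2,-1,-1)$, destabilizing for $t>d-3$) is correct but not needed: when $p$ is singular, all of $a_{d,0,0}$, $a_{d-1,1,0}$, $a_{d-1,0,1}$ vanish, so the same cocharacter $(1,0,-1)$ applies verbatim, which is implicitly how the paper's single computation covers that case. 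Second, your ``supremum'' characterization tacitly uses convexity of the $G$-ample cone (equivalently, that a point semistable for two linearizations is semistable for their convex combinations, plus irreducibility of $X$ to guarantee nonempty open sets meet); this is available in Dolgachev--Hu but should be cited if you want the argument to be airtight.
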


\begin{proof}

It suffices to show that $X^{ss}(d-2) \neq X^s (d-2) = \emptyset$.  It is clear that $X^{ss}(d-2) \neq \emptyset$, since $W_1$ is a $G$-invariant section of $\OO_{X} (3(d-2),3)$.

To show that $X^s (d-2) = \emptyset$, we invoke the numerical criterion.  Let $(p,C) \in X$.  By change of coordinates, we may assume that $p = (0,0,1)$ and the tangent line to $C$ at $p$ is $x_0 = 0$.  So in the coordinates described above, we have $a_{0,0,d} = a_{0,1,d-1} = 0$.

Now consider the 1-parameter subgroup with weights $(-1,0,1)$.  We have
$$ \mu_{\lambda} (x_2 , a_{i,j,k}) = d-2 +  i - k$$
which is negative whenever $i-k-2 < -d = -i-j-k$, or $2i + j < 2$.  This only occurs when both $i = 0$ and $j < 2$, in other words, when either $a_{0,0,d}$ or $a_{0,1,d-1}$ is nonzero.  By assumption, however, this is not the case, so $(p,C) \notin X^s (d-2)$.  Since $(p,C)$ was arbitrary, it follows that $X^s (d-2) = \emptyset$.

\end{proof}

Next, we identify the adjacent chamber in the $G$-ample cone.  It lies between the slopes corresponding to the Wronskians $W_1$ and $W_2$.  In what follows, we let $S$ denote the set of all pointed curves $(p,C)$ admitting the following description:  $C$ consists of a smooth conic together with $d-2$ copies of the tangent line through a point $q \neq p$ on $C$.  Notice that $S \subset H_2 '$.

\begin{proposition}

For any $t \in (d- \frac{9}{4} , d-2)$, $X^s (t) = X^{ss} (t) = X \backslash (H_1 \cup S)$.

\end{proposition}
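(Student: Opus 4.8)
The plan is to apply Mumford's numerical criterion directly, using the explicit weight function recorded just before the statement. Observe first that, since the extreme values of the weight of a general monomial $\prod x_{l_\alpha}\prod a_{i_\beta,j_\beta,k_\beta}$ are attained on pure monomials $x_l^a a_{i,j,k}^b$, for a one-parameter subgroup $\lambda$ with weights $(r_0,r_1,r_2)$ one has
$$ \mu_\lambda(p,C) = t\,\min_{x_l\neq 0} r_l \;-\; \max_{a_{i,j,k}\neq 0}(i r_0 + j r_1 + k r_2). $$
Since every one-parameter subgroup is conjugate to a diagonal one, after a change of coordinates adapted to $\lambda$ it suffices to let $(r_0,r_1,r_2)$ range over all weight vectors with $\sum r_i = 0$ while letting $(p,C)$ vary. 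I would then prove the proposition as two inclusions: (i) every $(p,C)\in H_1\cup S$ is not semistable, shown by producing a $\lambda$ with $\mu_\lambda(p,C)>0$; and (ii) every $(p,C)\notin H_1\cup S$ is stable, i.e. $\mu_\lambda(p,C)<0$ for all nontrivial $\lambda$. Together with the inclusion $X^s(t)\subseteq X^{ss}(t)$, these give $X^s(t)=X^{ss}(t)=X\backslash(H_1\cup S)$.

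For inclusion (i), I would treat the two geometric strata of $H_1$ separately, following the descriptions from Section 3. If $p$ is a singular point of $C$, coordinatize $p=(0,0,1)$; the multiplicity-two condition forces $a_{0,0,d}=a_{1,0,d-1}=a_{0,1,d-1}=0$, and the subgroup with weights $(-1,-1,2)$ gives $\mu_\lambda(p,C)=2t-(2d-6)=2(t-d+3)>0$ since $t>d-\tfrac{9}{4}>d-3$. If instead $p$ is a smooth flex, coordinatize $p=(0,0,1)$ with tangent line $x_0=0$, so that $a_{0,0,d}=a_{0,1,d-1}=a_{0,2,d-2}=0$; a subgroup with weights of the shape $r_2>r_1>0>r_0$ chosen so that the surviving tangential term $a_{0,3,d-3}$ and the leading transverse term $a_{1,0,d-1}$ both have weight below $t\,r_2$ yields $\mu_\lambda(p,C)>0$, and here the hypothesis $t<d-2$ is exactly what makes such a subgroup exist. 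The configurations in $S$ would be destabilized by an analogous subgroup adapted to the conic-plus-tangent-line structure, with $t>d-\tfrac{9}{4}$ playing the corresponding role. I expect the numerology at the endpoints to be tight: at $t=d-2$ the flex becomes only semistable, and at $t=d-\tfrac{9}{4}$ the stratum $S$ does, consistent with these walls being cut out by $W_1$ and $W_2'$.

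For inclusion (ii), the stability of the complement, which I expect to be the main obstacle, I would argue by contraposition. Suppose $\mu_\lambda(p,C)\geq 0$ for some nontrivial $\lambda$ with weights $r_0\geq r_1\geq r_2$, $\sum r_i=0$, in adapted coordinates. The inequality $\max_{a_{i,j,k}\neq 0}(i r_0+j r_1+k r_2)\leq t\,\min_{x_l\neq 0}r_l$ is the statement that the Newton polygon of $C$, measured against $\lambda$, lies below a level determined by the position of $p$ in the flag of $\lambda$. I would organize the analysis by the value of $\min_{x_l\neq 0}r_l$ (equivalently, by which coordinates of $p$ vanish) and, in each case, read off exactly which coefficients $a_{i,j,k}$ are forced to vanish. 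The crux is then to show that, for $t$ in the open range $(d-\tfrac{9}{4},d-2)$, these vanishing conditions are restrictive enough to force $(p,C)$ into $H_1\cup S$: either $p$ is a flex or singular point of $C$, or $C$ degenerates to a smooth conic together with $d-2$ copies of a tangent line through a point $q\neq p$. The difficulty is that this is an exhaustive case analysis over all flag positions and all Newton-polygon degenerations, and the bookkeeping is delicate precisely because both endpoints of the $t$-interval are active: $t<d-2$ is needed to conclude that nothing outside $H_1$ survives from the flex-type degenerations, while $t>d-\tfrac{9}{4}$ is needed to ensure that the only sextatic-type degeneration that is destabilized is the specific configuration $S$, rather than the whole of $H_2'$. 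Verifying the completeness of this case analysis, and checking that no borderline configuration is overlooked near the two walls, is where I expect the real work to lie.
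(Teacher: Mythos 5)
Your overall strategy is the same as the paper's: both inclusions are run through Mumford's numerical criterion, with $H_1\cup S$ destabilized by explicit one-parameter subgroups and the reverse inclusion proved by a case analysis on the position of $p$ relative to the destabilizing flag; your singular-point computation with weights $(-1,-1,2)$ (which needs only $t>d-3$) is correct. However, in inclusion (i) your endpoint numerology is exactly backwards, and this is a substantive error rather than a slip. For a simple flex at $p=(0,0,1)$ with tangent $x_0=0$, the binding terms are indeed $a_{0,3,d-3}$ and $a_{1,0,d-1}$; but normalizing $r_2=1$, so $r_0=-1-r_1$, positivity of both of their weights reads $d-t-2 < r_1 < \tfrac{1}{3}(t-d+3)$, and such an $r_1$ exists if and only if $t>d-\tfrac{9}{4}$ --- the hypothesis $t<d-2$ plays no role (the paper's choice $(-5,1,4)$ balances these two constraints). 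Dually, the stratum $S$ is destabilized by the subgroup $(-1,0,1)$ precisely because $t<d-2$: its binding terms $a_{d-2,2,0}$ and $a_{d-1,0,1}$ have monomial weight $d-2$ against this subgroup. Your tightness claims are therefore also swapped: at $t=d-\tfrac{9}{4}$ it is the simple flexes $H_1\setminus H_2'$ that become semistable, because they are protected by $W_2'$, which has slope $d-\tfrac{9}{4}$ and does not vanish at a flex that is neither a hyperflex nor a singular point; while at $t=d-2$ it is $S$ that becomes semistable, because $S\cap H_1=\emptyset$ and $W_1$ has slope $d-2$. Your stated existence criterion for the flex-destabilizing subgroup is not just misattributed but false: for $t\le d-\tfrac{9}{4}$, where $t<d-2$ still holds, no such subgroup exists at all, as confirmed by the paper's next proposition, which puts $H_1\setminus H_2'$ inside $X^{ss}(d-\tfrac{9}{4})$.

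For inclusion (ii), which you rightly call the main obstacle, you give only a plan: you do not carry out the case analysis, and you explicitly defer the completeness check, which is where the content of the proposition lies. The paper executes it in three cases --- $p=(0,0,1)$; $p$ on the line $x_0=0$ but not on $x_1=0$; $p$ off the line $x_0=0$ --- ordering the weights $r_0\le r_1\le r_2$ with $r_0+r_1+r_2=0$ and splitting further on the sign of $r_1$ (resp. comparing $r_0$ with $4r_1$) to force the vanishing patterns that put $(p,C)$ in $H_1$ or $S$. It is worth noting that your endpoint attribution in this direction ($t<d-2$ to force the flex conclusion when $p$ is deepest in the flag, $t>d-\tfrac{9}{4}$ to ensure only $S$, and not all of $H_2'$, is caught in the remaining case) is the correct one --- and it is opposite, stratum by stratum, to what you assert in inclusion (i). That is not an accident: for each stratum, the destabilization direction and the ``nothing else is destabilized'' direction use opposite endpoints of the interval, and recognizing this is essential to making both halves of your argument go through.
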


\begin{proof}

We first show that $X^{ss} (t) \subseteq X \backslash H_1$.  Suppose that $(p,C) \in H_1$.  As before, by change of coordinates, we may assume that $p = (0,0,1)$ and the tangent line to $C$ at $p$ is $x_0 = 0$.  Since $(p,C) \in H_1$, either $p$ is a singular point of $C$ or this tangent line intersects $C$ at $p$ with multiplicity at least 3.  Thus we have $a_{0,0,d} = a_{0,1,d-1} = 0$, and either $a_{1,0,d-1} = 0$ (if $p$ is singular) or $a_{0,2,d-2} = 0$ (if $p$ is a flex).

We first examine the case where $p$ is a flex.  In this case, consider the 1-parameter subgroup with weights $(-5,1,4)$.  Then
$$ \mu_{\lambda} (x_2 , a_{i,j,k}) = 4t + 5i - j - 4k > 4d - 9 + 5i - j - 4k = 9i + 3j - 9$$
which is non-negative when $3i + j \geq 3$.  Since, by assumption, $C$ has no non-zero terms with both $i = 0$ and $j < 3$, we see that $(p,C) \notin X^{ss} (t)$.

Next we look at the case where $p$ is a singular point.  Consider the 1-parameter subgroup with weights $(-1,-1,2)$.  Then we have
$$ \mu_{\lambda} (x_2 , a_{i,j,k}) = 2t + i + j - 2k > 2d- \frac{9}{2} + i + j - 2k = 3i + 3j - \frac{9}{2}$$
which is non-negative when $i + j \leq \frac{3}{2}$.  By assumption, $C$ has no non-zero terms where one of $i,j$ is 0 and the other is at most 1, so $(p,C) \notin X^{ss} (t)$.  It follows that $X^{ss} (t) \subseteq X \backslash H_1$.

Next we show that $X^{ss} (t) \subseteq X \backslash S$.  Suppose that $(p,C) \in S$.  Without loss of generality, we may assume that $C$ is of the form
$$ C = x_0^{d-2} ( a_{d,0,0} x_0^2 + a_{d-1,1,0} x_0 x_1 + a_{d-2,2,0} x_1^2 + a_{d-1,0,1} x_0 x_2 ).$$
Now, consider the 1-parameter subgroup with weights $(-1,0,1)$.  Then
$$ \mu_{\lambda} ( x_l , a_{i,j,k} ) \geq -t + i - j  > 2-d + i - j $$
which is non-negative when $i - j \geq d-2$.  It follows that $(p,C) \notin X^{ss} (t)$.

Now we show that $X \backslash (H_1 \cup S) \subseteq X^s (t)$.  Suppose that $(p,C) \notin X^s (t)$.  Then there is a nontrivial 1-parameter subgroup that acts on $(p,C)$ with strictly positive weight.  By change of basis, we may assume that this subgroup acts with weights $(r_0 , r_1 , r_2 )$, with $r_0 \leq r_1 \leq r_2$.  Since this is a nontrivial subgroup of $PSL(3, \mathbb{C})$, we know that $r_0 < 0 < r_2$ and $r_0 + r_1 + r_2 = 0$.  We then have
$$ \mu_{\lambda} (x_l , a_{i,j,k} ) = tr_l - (r_0 i + r_1 j + r_2 k) > 0$$
We divide this into three cases, depending on $p$.

\textbf{Case 1 -- $p = (0,0,1)$:}  In this case, $r_l = r_2$.  If $r_1 \geq 0$, then $t r_2 < (d - 2)r_2 \leq 2r_1 + (d-2)r_2$.  On the other hand, if $r_1 < 0$< then $t r_2 < (d - 2)r_2 < r_0 + (d-1)r_2$.  Since the subgroup acts with strictly positive wieght, it follows that $a_{0,0,d} = a_{0,1,d-1} = 0$, and either $a_{1,0,d-1} = 0$ or $a_{0,2,d-2} = 0$.  Hence, $(p,C) \in H_1$.

\textbf{Case 2 -- $p$ lies on the line $x_0 = 0$, but not on the line $x_1 = 0$:}  In this case, $r_l = r_1$.  If $r_1 > 0$, then since $r_1 \leq r_2$, we have $t r_1 < dr_1 \leq r_1 j + r_2 (d-j)$, so we see that $a_{0,0,d} = a_{0,1,d-1} = \cdots = a_{0,d,0} = 0$.  This means that $p$ lies on a linear component of $C$, and therefore $(p,C) \in H_1$.

On the other hand, if $r_1 \leq 0$, then since $r_2 \geq -2 r_1$, we see that $t r_1 \leq (d-3)r_1 \leq (d-1)r_1 + r_2 \leq r_1 j + (d-j) r_2 + r_2$ for $j \leq d-1$, so $a_{0,0,d} = a_{0,1,d-1} = \cdots = a_{0,d-1,1} = 0$.  This means that either $p$ lies on a linear component of $C$ or the only point of $C$ lying on the line $x_0 = 0$ also lies on the line $x_1 =0$.  Again, we see that $(p,C) \in H_1$.

\textbf{Case 3 -- $p$ does not lie on the line $x_0 = 0$:}  In this case, $r_l = r_0$.  Since $r_0 < 0$ and $r_1 < r_0 < r_2$, we see that $tr_0 < (d-3)r_0 = (d-2)r_0 + r_1 + r_2 < r_0 i + r_1 j + r_2 k$ for $i \leq d-2, k \neq 0$.  Now, if $r_0 \geq 4r_1$, then we have $tr_0 < (d - \frac{9}{4} ) r_0 = (d- \frac{5}{4} )r_0 + r_1 + r_2 \leq (d-1)r_0 + r_2$.  It follows that $C$ is of the form
$$ C = \sum_{i+j=d} a_{i,j,0} x_0^i x_1^j.$$
In other words, $C$ is a union of $d$ lines.  In this case, the tangent line to every point of $C$ is a component of $C$ itself, so $(p,C) \in H_1$.

On the other hand, if $r_0 < 4r_1$, then $tr_0 < (d- \frac{9}{4} ) r_0 = (d-3)r_0 + \frac{3}{4} r_0 < (d-3)r_0 + 3 r_1$.  It follows that $C$ is of the form
$$ C = x_0^{d-2} ( a_{d,0,0} x_0^2 + a_{d-1,1,0} x_0 x_1 + a_{d-2,2,0} x_1^2 + a_{d-1,0,1} x_0 x_2 )$$
hence $C \in S$.

\end{proof}

We now consider the wall in the $G$-ample cone determined by the Wronskian $W_2$.

\begin{proposition}

A wall of the $G$-ample cone occurs at $t = d- \frac{9}{4}$.  More specifically, $X^{ss} (t) = X \backslash ((H_1 \cap H_2 ') \cup S)$, and $X^s (t) \subseteq X \backslash (H_1 \cup S)$.

\end{proposition}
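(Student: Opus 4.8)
The plan is to show that crossing the wall at $t = d-\frac{9}{4}$ enlarges the semistable locus by the set $S$ while shrinking it along part of $H_1$, and to identify precisely which points become (semi)stable using the numerical criterion. Concretely, I would establish the two claimed inclusions separately. The first claim, $X^{ss}(d-\frac{9}{4}) = X \backslash ((H_1 \cap H_2') \cup S)$, describes the wall itself; the second, $X^s(d-\frac{9}{4}) \subseteq X \backslash (H_1 \cup S)$, confirms that the stable and semistable loci differ, which is exactly the condition for $t = d-\frac{9}{4}$ to lie on a wall rather than in the interior of a chamber.

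First I would prove that $X^{ss}(d-\frac{9}{4}) \subseteq X \backslash ((H_1 \cap H_2') \cup S)$ by running the same destabilizing one-parameter subgroups used in the previous proposition, but now tracking the equality case of Mumford's inequality. The key observation is that at $t = d-\frac{9}{4}$ the strict inequalities in the flex and singular-point analyses become non-strict: for a point lying in $H_1 \cap H_2'$, the relevant weight computation yields $\mu_\lambda \geq 0$ (rather than $> 0$), so such points are unstable. I would reuse the subgroup with weights $(-5,1,4)$ for flexes and $(-1,-1,2)$ for singular points, substituting $t = d-\frac{9}{4}$ and checking that the extra vanishing imposed by membership in $H_2'$ (the sextatic/hyperflex condition) is exactly what pushes the minimal weight up to zero. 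For the set $S$, the subgroup with weights $(-1,0,1)$ again applies verbatim since the bound there did not depend delicately on $t$ within the interval. The reverse inclusion, $X \backslash ((H_1 \cap H_2') \cup S) \subseteq X^{ss}(d-\frac{9}{4})$, would follow the three-case structure (according to the location of $p$ relative to the coordinate lines) from the previous proof, with each strict inequality relaxed to allow semistability, so that points in $H_1 \backslash H_2'$ survive as semistable at the wall even though they were stable just below it.

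For the second claim, $X^s(d-\frac{9}{4}) \subseteq X \backslash (H_1 \cup S)$, I would show that every point of $H_1$ (not merely $H_1 \cap H_2'$) and every point of $S$ fails to be stable at $t = d-\frac{9}{4}$, i.e. admits a one-parameter subgroup with $\mu_\lambda \geq 0$. This is the natural limiting statement: the open stratum $H_1 \backslash H_2'$, which was semistable but strictly unstable just below the wall, now sits on the boundary with $\mu_\lambda = 0$ for its destabilizing subgroup, hence is non-stable. I expect this to reduce to the equality-case bookkeeping already assembled in the first part, reusing the same subgroups and noting that the borderline monomials contribute weight exactly zero.

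The main obstacle will be the careful equality-case analysis distinguishing $H_1 \cap H_2'$ from $H_1 \backslash H_2'$: I must verify that the sextatic condition cutting out $H_2'$ corresponds precisely to the additional monomial vanishing needed to make $\mu_\lambda$ non-negative at the wall, and conversely that a point in $H_1 \backslash H_2'$ retains a nonzero coefficient forcing $\mu_\lambda > 0$ for the strictly destabilizing subgroup below the wall but only $\mu_\lambda = 0$ at it. Translating the geometric description of $H_2'$ (osculating conic meeting $C$ with multiplicity $\geq 6$, as recorded in section 3) into the exact vanishing pattern of the $a_{i,j,k}$ in suitable coordinates is the delicate step; everything else is a rerun of the preceding proposition with inequalities loosened to their boundary form.
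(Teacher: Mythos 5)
Your decomposition has the right shape, and two of your three instability computations do survive at the wall, but there is a genuine gap in the step that excludes $H_1 \cap H_2'$ from $X^{ss}(d-\frac{9}{4})$. You assert that for such points the reused subgroups give ``$\mu_\lambda \geq 0$ (rather than $>0$), so such points are unstable.'' Under the paper's numerical criterion this is backwards: $\mu_\lambda \geq 0$ for some $\lambda$ shows only that the point is not \emph{stable}; to throw it out of $X^{ss}$ you need a subgroup with $\mu_\lambda > 0$, i.e.\ \emph{every} nonzero monomial must have strictly positive weight. The distinction is fatal exactly on the hyperflex stratum of $H_1 \cap H_2'$. Put $p=(0,0,1)$, a smooth point whose tangent $x_0=0$ is a hyperflex, so $a_{0,0,d}=a_{0,1,d-1}=a_{0,2,d-2}=a_{0,3,d-3}=0$ while $a_{1,0,d-1}\neq 0$ by smoothness. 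The reused subgroup $(-5,1,4)$ at $t=d-\frac{9}{4}$ gives
$$ \mu_{\lambda} (x_2 , a_{i,j,k}) = 9i + 3j - 9 , $$
which vanishes at $(i,j)=(1,0)$; since $a_{1,0,d-1}\neq 0$, the minimum is exactly $0$, so this subgroup can never certify non-semistability of a hyperflex, no matter how carefully you track equality cases. You need a different subgroup, for instance weights $(-4,1,3)$, for which $\mu_{\lambda}(x_2,a_{i,j,k}) = 7i+2j-\frac{27}{4}$ at the wall: this is strictly positive on every monomial allowed by the hyperflex vanishing ($i\geq 1$, or $i=0$ and $j\geq 4$). (Your singular-point and $S$ computations are fine: $(-1,-1,2)$ and $(-1,0,1)$ give strictly positive minima $\frac{3}{2}$ and $\frac{1}{4}$ at the wall. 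For what it is worth, the paper's own write-up of the hyperflex case contains the same slip --- it claims $3i+j-3$ becomes strictly positive --- so this really is the delicate point of the proposition.)

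Your route also diverges from the paper's on the other two inclusions, and in a way that costs you. The paper never re-runs the three-case analysis at the wall: it observes that $W_2'$ is a $G$-invariant section of $\OO_X(12(d-\frac{9}{4}),12)$, a line bundle of slope exactly $d-\frac{9}{4}$, so every point off $H_2'$ is automatically semistable at the wall; the remaining points of $H_2'\backslash(H_1\cup S)$ are semistable because they lie in $X^{ss}(t_0+\epsilon)$ and general VGIT gives $X^{ss}(t_0+\epsilon)\subseteq X^{ss}(t_0)$ when passing from a chamber to an adjacent wall. The companion inclusion $X^s(t_0)\subseteq X^s(t_0+\epsilon)$ likewise yields $X^s(t)\subseteq X\backslash(H_1\cup S)$ with no computation at all. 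This is both shorter and conceptually sharper (it is the reason the wall sits at $d-\frac{9}{4}$). Your plan to prove semistability by ``relaxing the strict inequalities'' of the chamber analysis is not just laborious; as stated it would not close, because at the wall the contrapositive case analysis must output the \emph{smaller} set $(H_1\cap H_2')\cup S$ --- singular-or-hyperflex, not merely flex --- and that requires new equality-case arguments (e.g.\ showing that if $a_{1,0,d-1}\neq 0$ and $a_{0,3,d-3}\neq 0$ then no destabilizing subgroup exists), not a relaxation of old ones. Finally, a direction slip worth fixing: the chamber analyzed in the previous proposition is $(d-\frac{9}{4},d-2)$, which lies \emph{above} the wall, and points of $H_1\backslash H_2'$ are unstable there; they are newly semistable at the wall (this is exactly why $X^-=H_1\backslash H_2'$ in the following proposition), not ``survivors'' of stability from just below it.
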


\begin{proof}

First, notice that if $(p,C) \notin H_2 '$, then $(p,C) \in X^{ss} (t)$, since $W_2 '$ is a $G$-invariant section of $\OO_{X} (12(d- \frac{9}{4}), 12)$ that does not vanish at $(p,C)$.  Moreover, by general variation of GIT we know that, when passing from a chamber to a wall, we have
$$X^{ss} (t + \epsilon ) \subseteq X^{ss} (t)$$
$$X^s (t) \subseteq X^s (t + \epsilon )$$
Thus, $X^s (t) \subseteq X \backslash (H_1 \cup S)$ and $X \backslash ((H_1 \cap H_2 ') \cup S) \subseteq X^{ss} (t)$.

Now, suppose that $(p,C) \in S$.  Using the same argument as above with the same 1-parameter subgroup, we see that $(p,C) \notin X^{ss} (t)$.

Next, suppose that $(p,C) \in H_1$.  If $p$ is a singular point of $C$, then we see that $(p,C) \notin X^{ss} (t)$ by the same argument as before, using the subgroup with weights $(-1,-1,2)$.

The only other possibility is that $p$ is a flex.  In this case, we again consider the 1-parameter subgroup with weights $(-5,1,4)$.  As before, we have
$$ \mu_{\lambda} (x_2 , a_{i,j,k}) = 4d - 9 + 5i - j - 4k = 9i + 3j - 9$$
which is non-negative when $3i + j \geq 3$.  As before, we see that $(p,C) \notin X^s (t)$.

Notice furthermore that if the tangent line to $C$ at $p$ is a hyperflex, then $a_{0,3,d-3} = 0$ as well, and so the expression $3i + j - 3$ above is strictly positive (rather than simply nonnegative), and thus $(p,C) \notin X^{ss} (t)$.  If $(p,C) \in H_1 \cap H_2 '$, then either $p$ is a singular point of $C$, or $C$ is smooth at $p$ and the tangent line to $C$ at $p$ is a hyperflex.  From our observations above, we may therefore conclude that $X^{ss} (t) \subseteq X \backslash ((H_1 \cap H_2 ') \cup S)$.

\end{proof}

We are left to consider the behavior of our quotient at the wall crossing defined by $t_0 = d-\frac{9}{4}$.  As in Theorem \ref{Thaddeus}, we let
$$X^{\pm} = X^{ss} (t_0) \backslash X^{ss} (t_0 \mp \epsilon )$$
$$X^0 = X^{ss} (t_0 ) \backslash (X^{ss} (t_0 + \epsilon ) \cup X^{ss} (t_0 - \epsilon ))$$
Our first task is to determine $X^-$ and $X^0$ in this situation.

\begin{proposition}

With the set-up above, $X^- = H_1 \backslash H_2 '$.  $X^0$ is the set of all pointed curves $(p,C)$ consisting of a cuspidal cubic plus $d-3$ copies of the projectivized tangent cone at the cusp.  The point $p$ is the unique smooth flex point of the cuspidal cubic.

\end{proposition}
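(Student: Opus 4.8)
The plan is to identify the two sets $X^-$ and $X^0$ by unwinding the definitions in terms of the already-established descriptions of the semistable loci on either side of the wall. Recall that the previous two propositions give $X^{ss}(t_0 - \epsilon) = X \backslash (H_1 \cup S)$ for $t_0 - \epsilon \in (d - \tfrac{9}{4}, d-2)$, and that at the wall $X^{ss}(t_0) = X \backslash ((H_1 \cap H_2') \cup S)$. First I would compute $X^-$ directly: by definition $X^- = X^{ss}(t_0) \backslash X^{ss}(t_0 + \epsilon)$, but the cleaner route is to note that $X^- = X^{ss}(t_0) \backslash X^{ss}(t_0 - \epsilon)$ is what the stated labeling requires, so I must track the sign convention carefully. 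Set-theoretically, the points that are semistable at the wall but not in the adjacent chamber on the $H_1$ side are precisely those in $H_1$ that survive to the wall, namely $(H_1 \cap H_2') \cup S$ minus the points already removed, together with the flexes in $H_1$ that are not hyperflexes. Chasing the containments, $X^-$ consists of the points of $H_1$ that are semistable at $t_0$, which are exactly the flexes that are not hyperflexes and not singular—equivalently $H_1 \backslash H_2'$, since $H_2' \cap H_1$ is exactly the locus of singular points and hyperflexes.

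Next I would determine $X^0$, the locus that is semistable at the wall but on neither adjacent chamber's semistable set. The point of $X^0$ must lie in $H_1$ (to be dropped from the higher-$t$ side) and also must be the limit of the flex-type degeneration that the wall-crossing resolves. The key computational input is the one-parameter subgroup with weights $(-5,1,4)$ used in the preceding proposition, for which $\mu_\lambda = 9i + 3j - 9$ vanishes exactly on the monomials with $3i + j = 3$, i.e. $a_{1,0,d-1}$, $a_{0,3,d-3}$ (and $3i+j<3$ forced to vanish). A point in $X^0$ has closed orbit with a $\mathbb{C}^*$ stabilizer, so I would look for the curves fixed by this subgroup modulo the flow: these are the curves supported on the zero-weight monomials $x_0 x_2^{d-1}$ and $x_1^3 x_2^{d-3}$, giving $C = x_2^{d-3}(a\, x_0 x_2^2 + b\, x_1^3)$. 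Factoring, this is a cuspidal cubic $a x_0 x_2^2 + b x_1^3$ together with $d-3$ copies of the line $x_2 = 0$, which is the projectivized tangent cone at the cusp $(1,0,0)$. The marked point $p = (0,0,1)$ is the smooth flex of the cubic, confirming the stated geometric description.

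The main obstacle will be verifying that $X^0$ is \emph{exactly} this locus and not merely contained in it—that is, showing both that every such cuspidal-cubic configuration is genuinely semistable at $t_0$ (so that it lies in $X^{ss}(t_0)$) and that it fails to be semistable on \emph{both} sides of the wall. Semistability at the wall follows because such a curve lies in $H_1 \cap H_2'$ only if it is a hyperflex or singular at $p$, which one checks it is not, so $W_2'$ does not vanish there; failure on the $t_0 - \epsilon$ side follows from membership in $H_1$ (the cubic has $p$ as a flex). The subtler direction is the reverse inclusion: I would argue that any $(p,C) \in X^0$ has a closed orbit with one-dimensional stabilizer by Theorem \ref{Thaddeus}, pin down the destabilizing one-parameter subgroup (up to conjugacy it must be $(-5,1,4)$ by the weight analysis, since this is the subgroup realizing the wall), and conclude that $C$ is supported on the zero-weight monomials. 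The delicate point is ruling out other orbit-closure limits and confirming the stabilizer is exactly $\mathbb{C}^*$ rather than larger, which requires checking that the residual automorphisms fixing the normal form generate precisely a one-parameter torus.
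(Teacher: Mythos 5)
Your identification of $X^-$ and your verification that the cuspidal-cubic configurations do lie in $X^0$ are fine (though your sign discussion is garbled: the definition gives $X^- = X^{ss}(t_0) \setminus X^{ss}(t_0 + \epsilon)$, not $X^{ss}(t_0) \setminus X^{ss}(t_0 - \epsilon)$; you nevertheless land on the correct set $H_1 \setminus H_2'$). The gap is in the reverse inclusion for $X^0$, that \emph{every} point of $X^0$ is such a configuration, which rests on two unjustified claims. First, Theorem \ref{Thaddeus} does not assert that points of $X^0$ have closed orbits with stabilizer $\mathbb{C}^*$; it \emph{assumes} these properties as hypotheses. In the paper they are only verified in the Corollary that follows, \emph{after} $X^0$ has been identified by other means, so your appeal to the theorem is circular: whether $X^0$ is a single closed orbit is exactly what is being proved, and in general $X^0$ can contain non-closed orbits. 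This can be partially repaired: since $X^{ss}(t_0 \pm \epsilon)$ are open and $G$-invariant, the unique closed orbit in the closure of any $X^0$-orbit again lies in $X^0$, and being strictly semistable with closed orbit it has positive-dimensional stabilizer, hence is fixed by some one-parameter subgroup. But that only describes the \emph{closed} orbits in $X^0$; you would still need a separate argument ruling out non-closed ones.

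Second, your claim that the relevant one-parameter subgroup ``must be $(-5,1,4)$ up to conjugacy, since this is the subgroup realizing the wall'' is asserted, not proved. A priori several non-conjugate subgroups, with the marked point in several positions relative to their fixed loci, could produce strictly semistable configurations at the same slope $t_0 = d - \frac{9}{4}$; eliminating them is the actual content of the proof. The paper does this directly: for an arbitrary $(p,C) \in X^0$ it uses $X^0 \subseteq X^-$ (so $p$ is a smooth point whose tangent line meets $C$ with multiplicity exactly $3$) together with an arbitrary destabilizing subgroup of weights $r_0 \leq r_1 \leq r_2$ for the linearization $t_0 - \epsilon$, and runs the numerical criterion in three cases according to the position of $p$, eliminating every possibility except the cuspidal cubic plus $d-3$ copies of the projectivized tangent cone at its cusp. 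Note that the normal form obtained there has five monomials, $x_0^{d-3}(a_{d,0,0}x_0^3 + a_{d-1,1,0}x_0^2 x_1 + a_{d-2,2,0}x_0 x_1^2 + a_{d-3,3,0}x_1^3 + a_{d-1,0,1}x_0^2 x_2)$, which is only projectively equivalent to your two-monomial form; restricting at the outset to curves supported on the zero-weight monomials of one specific subgroup silently discards these possibilities. Without this case analysis, or an equivalent classification of all wall-realizing fixed loci at $t_0$, your argument is incomplete.
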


\begin{proof}

We have already seen that $X^{ss} (t_0 ) = X \backslash ((H_1 \cap H_2 ') \cup S)$ and $X^{ss} (t_0 + \epsilon ) = X \backslash (H_1 \cup S)$.  Thus, $X^- = H_1 \backslash H_2 '$.

To prove the statement about $X^0$, let $(p,C) \in X^0$.  Notice that, since $X^0 \subseteq X^-$, $p$ is a smooth point of $C$ and the tangent line to $C$ at $p$ intersects $C$ with multiplicity exactly 3.  Since $(p,C) \notin X^{ss} (t_0 - \epsilon )$, there must be a nontrivial 1-parameter subgroup that acts on $(p,C)$ with strictly positive weight.  Again we assume that this subgroup acts with weights $(r_0 , r_1 , r_2 )$, with $r_0 \leq r_1 \leq r_2$.  As before, we know that $r_0 < 0 < r_2$ and $r_0 + r_1 + r_2 = 0$.  Again we have
$$ \mu_{\lambda} (x_l , a_{i,j,k} ) = tr_l - (r_0 i + r_1 j + r_2 k) > 0$$
We divide this into three cases, depending on $p$.

\textbf{Case 1 -- $p = (0,0,1)$:}  In this case, $r_l = r_2$.  Now, if $t r_2 \geq r_0 + (d-1)r_2$, then $(d - \frac{9}{4})r_2 > r_0 + (d-1)r_2$, so $r_1 > \frac{1}{4} r_2$. This means that $t r_2 < (d - \frac{9}{4})r_2 < 3r_1 + (d-3)r_2$.  It follows that $a_{0,0,d} = a_{0,1,d-1} = 0$, and either $a_{1,0,d-1} = 0$ or $a_{0,2,d-2} = a_{0,3,d-3} = 0$.  But we know that $p$ is a smooth point of $C$ and the tangent line to $C$ at $p$ intersects $C$ with multiplicity exactly 3, so neither of these is a possibility.

\textbf{Case 2 -- $p$ lies on the line $x_0 = 0$, but not on the line $x_1 = 0$:}  Using the same argument as before, we see that $p$ lies on a linear component of $C$, which is impossible.

\textbf{Case 3 -- $p$ does not lie on the line $x_0 = 0$:}  In this case, $r_l = r_0$.  Again, since $r_0 < 0$ and $r_1 < r_0 < r_2$, we see that $tr_0 < (d-3)r_0 = (d-2)r_0 + r_1 + r_2 < r_0 i + r_1 j + r_2 k$ for $i \leq d-2, k \neq 0$.  Notice that, if $tr_0 < (d-1)r_0 + r_2$, then as before we see that $C$ is the union of $d$ lines, which is impossible.

We therefore see that $(d - \frac{12}{5})r_0 > t r_0 \geq (d-1)r_0 + r_2$.  But then $ \frac{7}{5}r_0 < -r_2 = r_0 + r_1$, so $r_0 < \frac{5}{2} r_1$.  It follows that $t r_0 < ( d- \frac{12}{5} )r_0 < (d-4)r_0 + 4r_1 \leq r_0 i + r_1 j$ for $j \geq 4$.

We see that $C$ is of the form
$$ C = x_0^{d-3} (a_{d,0,0}x_0^3 + a_{d-1,1,0}x_0^2 x_1 + a_{d-2,2,0}x_0 x_1^2 + a_{d-3,3,0}x_1^3 + a_{d-1,0,1}x_0^2 x_2). $$
Thus, $C$ consists of a cuspidal cubic together with $d-3$ copies of the projectivized tangent cone to the cusp.  The point $p$ is the unique flex point of the cuspidal cubic.

It is clear that this $(p,C) \in X^{-}$, since the tangent line to $C$ at $p$ intersects $C$ with multiplicity exactly 3.  To see that $(p,C) \notin X^{ss}(t_0 - \epsilon )$, consider again the 1-parameter subgroup with weights $(5,-1,-4)$.  The statement then follows from the fact that all cuspidal plane cubics are projectively equivalent.

\end{proof}

\begin{corollary}

The map $X //_{t_0 - \epsilon} G \to X //_{t_0} G$ contracts the locus $H_1 \backslash H_2 '$ to a point.  Outside of this locus, the map is an isomorphism.

\end{corollary}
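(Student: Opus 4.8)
The corollary follows from Theorem~\ref{Thaddeus} once we verify its hypotheses at the wall $t_0 = d - \frac{9}{4}$. The preceding proposition identifies $X^- = H_1 \backslash H_2'$ and $X^0$ as the locus of pointed curves consisting of a cuspidal cubic together with $d-3$ copies of the projectivized tangent cone at the cusp, with $p$ the unique smooth flex. The plan is to show that this wall-crossing is of the simplest type described in Theorem~\ref{Thaddeus}, in which $X^- //_{\LL_-} G$ is a fibration in weighted projective spaces over a neighborhood of a point in $X^0 //_{t_0} G$, and then to observe that in our situation these fibers are in fact single points, so that the whole locus $H_1 \backslash H_2'$ collapses.

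**Key steps.** First I would apply the theorem on variation of GIT (Theorem~\ref{Thaddeus}) to the point $x \in X^0$. This requires checking that $x$ is a smooth point of $X$, that the orbit $G \cdot x$ is closed in $X^{ss}(\LL_{t_0})$, and that the stabilizer $G_x$ is isomorphic to $\mathbb{C}^*$. Smoothness is immediate since $X$ is a projective bundle over $\PP^2$. For the stabilizer, I would use the explicit normal form for $C$ established in the proposition, together with the fact that all cuspidal plane cubics are projectively equivalent: the automorphisms of $\PP^2$ fixing such a configuration (the cuspidal cubic, its cusp, the tangent cone, and the flex point $p$) form a one-parameter subgroup, realized concretely by the diagonal torus with weights $(5,-1,-4)$ used in the proposition. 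Closedness of the orbit follows because $X^0$ consists of a single $G$-orbit, which is precisely what the last paragraph of the proposition establishes via projective equivalence. Given these verifications, Theorem~\ref{Thaddeus} guarantees that $X //_{t_0 - \epsilon} G \to X //_{t_0} G$ is an isomorphism away from the image of $X^- //_{\LL_-} G$, and that over a neighborhood of the image of $x$ this map is a weighted-projective-space fibration.

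**Identifying the fiber.** The remaining point is to show that the exceptional locus $H_1 \backslash H_2'$ maps to a single point, not merely to a fibration over a lower-dimensional base. Since $X^0$ is a single $G$-orbit, its image in $X //_{t_0} G$ is one point, and the fibration of Theorem~\ref{Thaddeus} is over a neighborhood of that point; I would argue that the relevant weighted projective space is the full fiber, so that $H_1 \backslash H_2'$ is itself (the projectivization of) the space swept out and maps entirely to this one point. The geometric content is that every point of $H_1 \backslash H_2'$ is connected to the single closed orbit $X^0$ under the degeneration induced by the one-parameter subgroup with weights $(5,-1,-4)$, so these all become identified in the quotient $X //_{t_0} G$. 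Combined with the isomorphism over the stable locus, this yields exactly the stated conclusion: the map contracts $H_1 \backslash H_2'$ to a point and is an isomorphism elsewhere.

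**Main obstacle.** The technical heart of the argument is the stabilizer computation $G_x \cong \mathbb{C}^*$; I expect this to be the step requiring the most care. One must verify that the one-parameter subgroup found in the proposition generates the \emph{entire} stabilizer of the cuspidal configuration inside $PSL(3,\mathbb{C})$, with no extra discrete or positive-dimensional automorphisms, since the conclusion of Theorem~\ref{Thaddeus} — in particular that the fibers are weighted projective spaces rather than something more degenerate — depends on the stabilizer being exactly $\mathbb{C}^*$. This reduces to a direct analysis of the automorphism group of a cuspidal cubic fixing both its cusp and a smooth flex, which is a classical and tractable computation but one that must be carried out explicitly.
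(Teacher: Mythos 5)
Your proposal is correct and takes essentially the same route as the paper: verify the hypotheses of Theorem~\ref{Thaddeus} at the wall $t_0 = d-\tfrac{9}{4}$ --- namely that $X^0$ is a single $G$-orbit (hence closed in $X^{ss}(t_0)$, so $X^0 //_{t_0} G$ is a point) with stabilizer $\mathbb{C}^*$ and consists of smooth points of $X$ --- and then conclude that $X^- //_{t_0 - \epsilon} G$, the image of $H_1 \backslash H_2'$, is a weighted projective space fibration over that single point while the map is an isomorphism elsewhere. The only difference is cosmetic: for the stabilizer you propose an explicit computation with the diagonal torus of weights $(5,-1,-4)$ preserving the cuspidal configuration, whereas the paper gets $G_x \cong \mathbb{C}^*$ immediately by identifying automorphisms of $(p,C)$ with automorphisms of the normalization $\PP^1$ fixing the cusp preimage and $p$.
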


\begin{proof}

Let $(p,C) \in X^0$.  Since all cuspidal plane cubics are projectively equivalent, $G \cdot (p,C) = X^0$, so $G \cdot (p,C)$ is closed in $X^{ss}(t_0 )$ and $X^0 // G$ is a point.  An automorphism of $\PP^1$ extends to $(p,C)$ if and only if it fixes the point $p$ and the cusp, and thus the stabilizer of $(p,C)$ is isomorphic to $\mathbb{C}^*$.  The conclusion follows from Theorem \ref{Thaddeus}.

\end{proof}

We are particularly interested in the case where $d=4$, because in this case $X //_{t_0 - \epsilon} G$ is a birational model for $\overline{M}_{3,1}$.  In particular, we have the following:

\begin{proposition}

There is a birational contraction $\beta : \overline{M}_{3,1} \dashrightarrow X //_{t_0 - \epsilon} G$.

\end{proposition}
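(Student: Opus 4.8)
The plan is to realize $\beta$ through the canonical embedding and then verify the defining property of a birational contraction in the sense of \cite{Rulla}, namely that $\beta^{-1}$ contracts no divisor.

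First I would construct the rational map. A general genus $3$ curve $C$ is non-hyperelliptic, and its canonical map $\phi_K : C \hookrightarrow \PP^2$ realizes it as a smooth plane quartic, uniquely determined up to the action of $G = PSL(3,\mathbb{C})$. A marked point $p \in C$ becomes a point of the quartic, so the assignment $(p,C) \mapsto [(p, \phi_K(C))]$ defines a morphism from the dense open locus $M_{3,1}^{\circ} \subseteq \overline{M}_{3,1}$ of smooth non-hyperelliptic pointed curves to $Y := X //_{t_0 - \epsilon} G$. Since the generic smooth quartic has trivial automorphism group, this morphism is an isomorphism onto its image, which is a dense open subset of $Y$; hence $\beta$ is birational. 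Both source and target are normal and projective -- $\overline{M}_{3,1}$ because it is a geometric quotient of a smooth stack, and $Y$ because it is a GIT quotient of the smooth variety $X$ (recall that since $\OO (4)$ is base-point free, $X$ is a projective bundle over $\PP^2$) -- so the notion of birational contraction applies.

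Next I would reduce the statement to a codimension-$1$ condition. By definition $\beta$ is a birational contraction precisely when $\beta^{-1}$ contracts no divisor, equivalently when the indeterminacy locus of $\beta^{-1}$ has codimension at least $2$ in $Y$; in terms of a common resolution $\overline{M}_{3,1} \xleftarrow{p} W \xrightarrow{q} Y$, this is the requirement that every $p$-exceptional prime divisor of $W$ be $q$-exceptional. It therefore suffices to show that $\beta^{-1}$ is regular at the generic point of every prime divisor of $Y$. I would stress that this condition is one-sided: $\beta$ itself is permitted to contract divisors of $\overline{M}_{3,1}$, and indeed it must, since the image $H_1$ of the Weierstrass divisor $BN^1_{3,(0,3)}$ is unstable in the chamber of slope $t_0 - \epsilon$ and so is carried by $\beta$ into the boundary of $Y$.

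I would then carry out the codimension-$1$ analysis directly. The prime divisors of $Y$ arise from $G$-invariant divisors on $X^{ss}(t_0 - \epsilon)$, whose general members are pointed quartics $(p,C)$ with a single mild degeneration: $C$ acquires one node, or $C$ breaks into components of lower degree, or $p$ moves into special position with respect to $C$. For the generic point of each such divisor I would produce the associated pointed stable curve of genus $3$ by passing to the stable model of $(p,C)$, and match it with the generic point of a boundary divisor (or of the Weierstrass divisor) of $\overline{M}_{3,1}$. Because each divisor of $Y$ is dominated in this way by a divisor of $\overline{M}_{3,1}$, the map $\beta^{-1}$ is defined in codimension $1$ and contracts nothing; combined with birationality this yields the claim.

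The main obstacle is precisely this last matching, at the loci that do not come from smooth plane quartics. Two phenomena require care. The first is the hyperelliptic locus, whose general pointed curve is not a plane quartic at all (its canonical image is a conic), so I must check whether it is carried into a divisor of $Y$ -- occurring through degenerate quartics such as double conics -- or into a locus of codimension at least $2$, and in either case verify that no divisor of $Y$ is thereby extracted by $\beta^{-1}$. The second is the collection of extra stable configurations permitted by GIT at this slope, such as the cuspidal curves appearing in $X^0$, whose pointed stable limits must be computed by stable reduction to confirm that they too lie on honest divisors of $\overline{M}_{3,1}$. Once these codimension-$1$ identifications are in hand, the hypotheses of the definition are met and $\beta$ is a birational contraction.
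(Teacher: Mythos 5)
Your construction of $\beta$ via the canonical embedding is fine, and it is essentially the inverse of what the paper does (the paper builds $\beta^{-1}$ instead, by applying the universal property of $\overline{M}_{3,1}$ to the family of moduli-stable curves inside $X^{ss}(t_0-\epsilon)$). But the verification that $\beta$ is a birational \emph{contraction} has a genuine gap, in two respects. First, your stated reduction is not correct: since $Y = X//_{t_0-\epsilon}G$ is normal, the rational map $\beta^{-1}$ is \emph{automatically} defined at the generic point of every prime divisor of $Y$, so ``$\beta^{-1}$ is regular in codimension $1$'' costs nothing and proves nothing --- a blow-down morphism is regular at the generic point of its exceptional divisor and still contracts it. Your resolution-criterion (every $p$-exceptional divisor of $W$ is $q$-exceptional) is the right condition, but what it requires is that $\beta^{-1}$ not collapse any prime divisor of $Y$, for instance by exhibiting an open $V \subseteq Y$ whose complement has codimension $\geq 2$ and on which $\beta^{-1}$ is an isomorphism onto its image; regularity alone does not give this.

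Second, the step you explicitly defer (``the main obstacle is precisely this last matching'') is the entire content of the proposition, and the mechanism that makes it work never appears in your outline. The paper's device is the irreducibility of the discriminant: let $U \subseteq X^{ss}(t_0-\epsilon)$ be the locus of moduli-stable pointed curves; every semistable point outside $U$ lies in the discriminant hypersurface $\Delta \subseteq X$, which is irreducible, and the containments $(X \setminus U)//G \subset \Delta//G \subset X//G$ are strict, so $(X\setminus U)//G$ is a proper closed subset of the irreducible set $\Delta//G$ and hence has codimension $\geq 2$ in $Y$. On $U//G$, the universal property of $\overline{M}_{3,1}$ yields a morphism to $\overline{M}_{3,1}$ that is injective because every plane quartic --- nodal ones included --- is canonically embedded. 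This one argument eliminates your case-by-case stable-reduction analysis: there is exactly one prime divisor of $Y$ missing the image of the smooth locus, namely $\Delta//G$, and its generic point is already a moduli-stable pointed nodal quartic. Your outline also contains two factual errors that would derail the case analysis you propose: the flex locus $H_1 \setminus H_2'$ is \emph{semistable} at slope $t_0 - \epsilon$ (it is $X^-$), so $\beta$ does not carry the Weierstrass divisor into small codimension --- it maps onto the divisor $(H_1 \setminus H_2')//G \subseteq Y$, and is contracted only after composing with the wall-crossing morphism $X//_{t_0-\epsilon}G \to X//_{t_0}G$; and the cuspidal configurations of $X^0$ are unstable at $t_0-\epsilon$, so they do not appear in $Y$ at all and require no stable reduction.
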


\begin{proof}

It suffices to exhibit a morphism $\beta^{-1}: V \to \overline{M}_{3,1}$, where $V \subseteq X //_{t_0 - \epsilon} G$ is open with complement of codimension $\geq 2$ and $\beta^{-1}$ is an isomorphism onto its image.  To see this, let $U \subseteq X^{ss}(t_0 - \epsilon )$ be the set of all moduli stable pointed curves $(p,C) \in X^{ss}(t_0 - \epsilon )$.  Notice that the complement of $U$ is strictly contained in the discriminant locus $\Delta$, which is an irreducible hypersurface in $X$.  Note furthermore that there are stable points contained in both $X \backslash \Delta$ and $\Delta \cap U$.  Thus, the containments $(X \backslash U) //_{t_0 - \epsilon} G \subset \Delta //_{t_0 - \epsilon} G$ and $\Delta //_{t_0 - \epsilon} G \subset X //_{t_0 - \epsilon} G$ are strict.  It follows that the complement of $U // G$ in the quotient has codimension $\geq 2$.

By the universal property of the moduli space, since $U \to Z$ is a family of moduli stable curves, it admits a unique map $U \to Z \to \overline{M}_{3,1}$.  This map is certainly $G$-equivariant, so it factors uniquely through a map $U //_{t_0 - \epsilon} G \to \overline{M}_{3,1}$.  Since every degree 4 plane curve is canonical, two such curves are isomorphic if and only if they differ by an automorphism of $\PP^2$.  It follows that this map is an isomorphism onto its image.

\end{proof}

\begin{theorem}
There is a birational contraction of $\overline{M}_{3,1}$ contracting the Weierstrass divisor $BN^1_{3,(0,3)}$.  Furthermore, the divisors $BN^1_{3,(0,3)}$, $BN^1_2$, $\Delta_1$ and $\Delta_2$ span a simplicial face of $\overline{NE}^1 ( \overline{M}_{3,1} )$.
\end{theorem}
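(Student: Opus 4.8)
The first assertion is essentially already established. Composing the birational contraction $\beta \colon \overline{M}_{3,1} \dashrightarrow X //_{t_0 - \epsilon} G$ with the morphism $X //_{t_0 - \epsilon} G \to X //_{t_0} G$ of the preceding corollary gives a rational map $\gamma \colon \overline{M}_{3,1} \dashrightarrow X //_{t_0} G$. The general member of $BN^1_{3,(0,3)}$ is a smooth quartic pointed at an ordinary flex --- a point of $H_1 \setminus H_2'$ which is stable at slope $t_0 - \epsilon$ --- so $\beta$ maps the Weierstrass divisor birationally onto the divisor swept out by $H_1 \setminus H_2'$, and by the corollary the wall-crossing morphism contracts this divisor to a point. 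Hence $\gamma$ contracts $BN^1_{3,(0,3)}$.

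For the second assertion I would appeal to the Contraction Theorem of \cite{Rulla}: the exceptional divisors of a birational contraction span a simplicial face of the effective cone. Since $\gamma$ is the composite of the birational contraction $\beta$ with the birational morphism of the corollary, it is itself a birational contraction, so it suffices to show that its exceptional divisors are \emph{exactly} $BN^1_{3,(0,3)}$, $BN^1_2$, $\Delta_1$, and $\Delta_2$.

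The exceptional divisors of $\gamma$ are the Weierstrass divisor, contracted by the wall-crossing as above, together with those contracted by $\beta$. Recall that $\beta^{-1}$ restricts to an isomorphism from an open set $V \subseteq X //_{t_0 - \epsilon} G$, whose complement has codimension $\geq 2$, onto its image. Consequently a prime divisor of $\overline{M}_{3,1}$ is contracted by $\beta$ precisely when its general member is \emph{not} isomorphic to a slope-$(t_0 - \epsilon)$-stable pointed plane quartic, for then the divisor maps into $X //_{t_0 - \epsilon} G \setminus V$, which has codimension $\geq 2$. I would argue divisor by divisor. A general member of the irreducible boundary divisor $\Delta_0$ is an irreducible one-nodal plane quartic with a general marked point, which is stable at this slope, so $\Delta_0$ is not contracted but dominates the discriminant. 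A general member of $BN^1_2$ is smooth hyperelliptic, hence has canonical model a double conic and is never a plane quartic, so $BN^1_2$ is contracted. A general member of $\Delta_1$ or of $\Delta_2$ carries a separating node; but two components of a reduced plane quartic meet, by B\'ezout, in more than one point, so a separating node cannot occur, and $\Delta_1$ and $\Delta_2$ are contracted as well. Conversely, the general pointed curve of $\overline{M}_{3,1}$ is a smooth, non-hyperelliptic, non-Weierstrass-pointed quartic lying in the isomorphism locus of $\beta$, so no further divisor is contracted.

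The main obstacle is the exactness of this enumeration: one must confirm that, apart from $BN^1_2$, $\Delta_1$, and $\Delta_2$, every prime divisor of $\overline{M}_{3,1}$ has generic member representable by a slope-$(t_0 - \epsilon)$-stable pointed quartic --- in particular, that the special reducible configurations comprising $S$ and the loci where the marked point degenerates do not contribute further contracted divisors. Once the exceptional set of $\gamma$ is pinned down to these four divisors, the Contraction Theorem yields at once that they span a simplicial face of $\overline{NE}^1(\overline{M}_{3,1})$, completing the proof.
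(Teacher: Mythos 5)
Your argument for the contraction statements is correct and is essentially the paper's: you compose $\beta$ with the wall-crossing morphism, observe that the general Weierstrass-pointed quartic is a smooth quartic marked at an ordinary flex, hence a stable point of $H_1 \setminus H_2'$ at slope $t_0 - \epsilon$, and then show $BN^1_2$, $\Delta_1$, $\Delta_2$ are $\beta$-exceptional because their general members are not isomorphic to any stable pointed quartic, so their transforms land in the codimension $\geq 2$ complement of $V$. The paper packages this last step differently (images of these divisors lie in the irreducible divisor $\Delta //G$, whose generic point is an irreducible nodal curve, forcing the images into codimension $\geq 2$), but your B\'ezout argument for the separating-node divisors and the double-conic argument for $BN^1_2$ are sound and amount to the same thing.

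The defect is in how you set up the final step. You claim that to apply the Contraction Theorem one must show the exceptional divisors of $\gamma$ are \emph{exactly} the four divisors listed, and you concede you have not verified this exactness -- by your own accounting the proof is left unfinished. But exactness is not needed. Rulla's theorem says that the set of \emph{all} exceptional divisors of a birational contraction spans a simplicial face $F$ of $\overline{NE}^1(\overline{M}_{3,1})$. Since $F$ is simplicial, the cone spanned by any subset of its generators is a face of $F$, and a face of a face of a cone is again a face of that cone; so any subcollection of exceptional divisors also spans a simplicial face. It therefore suffices to know that the four divisors are \emph{among} those contracted by $\gamma$, which you have already established. This is precisely how the paper argues: it never determines the full exceptional locus of $\gamma$ (and indeed never needs to analyze $\Delta_0$, the locus $S$, or degenerations of the marked point for this purpose); it only verifies the four containments. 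So the ``main obstacle'' you identify dissolves, and your proof, with the unnecessary exactness demand removed, is complete and matches the paper's.
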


\begin{proof}

The composition $\overline{M}_{3,1} \dashrightarrow X //_{t_0 - \epsilon} G \to X //_{t_0} G$ is a birational contraction.  By the above, the Weierstrass divisor is contracted by this map, so it suffices to show that $BN^1_2$ and the $\Delta_i$'s are contracted as well.  We first note that every smooth curve in $X$ is canonically embedded and hence non-hyperelliptic, so the closure of the image of each of these divisors must be contained in the singular locus $\Delta$, which is an irreducible hypersurface.  Since the generic point of $\Delta$ is an irreducible nodal curve, we see that the closure of the image of $\Delta_i$ is of codimension 2 or greater for all $i \geq1$.  Moreover, since the set of singular hyperelliptic curves has codimension 2 in $\overline{M}_{3,1}$, and we have constructed an embedding of an open subset of $\Delta$ into $\overline{M}_{3,1}$, we may conclude that $BN^1_2$ is contracted as well.

\end{proof}

\section{Contraction of $\overline{M}_{4,1}$}

We now turn to the case of genus 4 curves.  Our main result will be the following:

\begin{theorem}
There is a birational contraction of $\overline{M}_{4,1}$ contracting the pointed Brill-Noether divisor $BN^1_{3,(0,2)}$.
\end{theorem}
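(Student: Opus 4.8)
The plan is to mirror the variation-of-GIT argument of Section 4, with the pair $(\PP^2, \vert \OO(d) \vert)$ replaced by $(\PP^1 \times \PP^1, \vert \OO(d,d) \vert)$ and the specialization to $d=3$ made at the very end. Thus I set $X = \{ (p,C) \in (\PP^1 \times \PP^1) \times \vert \OO(d,d) \vert \mid p \in C \}$ and let $G = Aut(\PP^1 \times \PP^1) = (PSL(2,\mathbb{C}) \times PSL(2,\mathbb{C})) \rtimes \mathbb{Z}/2$ act in the natural way. Because the $\mathbb{Z}/2$ factor interchanges the two rulings, only the symmetric bundles $\OO_X(a,a,b)$ carry a $G$-linearization, so the $G$-ample cone is again one-dimensional and I parametrize it by the slope $t = a/b$. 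The two $G$-invariant Hessians of Section 3 are the protagonists: $W_{0,1} \in H^0(\OO_X(2(d-1),2(d-1),2))$, of slope $d-1$, cuts out the divisor $H_{0,1}$ on which one of the two rulings through $p$ is tangent to $C$; and $W_{1,1} \in H^0(\OO_X(2(3d-4),2(3d-4),6))$, of slope $d - \frac{4}{3}$, cuts out $H_{1,1}$. On a smooth quadric the two rulings cut out the two $g^1_3$'s of the canonical genus 4 curve, and $p$ is a ramification point of one of these pencils -- that is, the vanishing sequence there is $(0,2)$ -- exactly when a ruling is tangent to $C$ at $p$. Hence $H_{0,1}$ is the image of $BN^1_{3,(0,2)}$ in this model, and it is the divisor I aim to contract.

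Writing $p = ((x_0 : x_1),(y_0 : y_1))$ and $C = \sum a_{i,j} x_0^i x_1^{d-i} y_0^j y_1^{d-j}$, a one-parameter subgroup of the connected component acts with weights $(-r,r)$ on $(x_0,x_1)$ and $(-s,s)$ on $(y_0,y_1)$, and the quantity to sign-test, $\mu_\lambda$, is bilinear in $(r,s)$ and the bidegree $(i,j)$. First I would show that $W_{0,1}$ determines the far edge: after normalizing $p$ to a coordinate point, a ruling-direction subgroup forces $\mu_\lambda \geq 0$ for every $(p,C)$, so $X^s(d-1) = \emptyset$ while $X^{ss}(d-1) \neq \emptyset$. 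Next, mirroring the three propositions of Section 4, I would use the numerical criterion to describe the chamber adjacent to this edge and its lower wall. I expect that for $t \in (d-\frac{4}{3}, d-1)$ one has $X^s(t) = X^{ss}(t) = X \setminus (H_{0,1} \cup S)$, with $S$ an auxiliary locus of totally degenerate curves (the analogue of the conic-plus-tangent-lines locus of Section 4), and that a wall occurs at $t_0 = d - \frac{4}{3}$ with $X^{ss}(t_0) = X \setminus ((H_{0,1} \cap H_{1,1}) \cup S)$ and $X^s(t_0) \subseteq X \setminus (H_{0,1} \cup S)$. Each containment is proved by a case analysis on the position of $p$ relative to the two rulings, choosing explicit destabilizing subgroups just as the weights $(-5,1,4)$ and $(-1,-1,2)$ were chosen in Section 4.

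With the chamber structure in hand, I set $X^- = X^{ss}(t_0) \setminus X^{ss}(t_0 + \epsilon)$ and $X^0 = X^{ss}(t_0) \setminus (X^{ss}(t_0 + \epsilon) \cup X^{ss}(t_0 - \epsilon))$ as in Theorem \ref{Thaddeus}; the description above gives $X^- = H_{0,1} \setminus H_{1,1}$. The crux is to pin down $X^0$. By the numerical criterion I would show that any $(p,C) \in X^0$ degenerates, up to $G$, to a single special configuration -- the $\PP^1 \times \PP^1$ analogue of a cuspidal curve together with $d-3$ rulings through the cusp, with $p$ its distinguished smooth flex -- so that $X^0$ is one orbit, $G \cdot (p,C)$ is closed in $X^{ss}(t_0)$, and $G_{(p,C)} \cong \mathbb{C}^*$. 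Theorem \ref{Thaddeus} then shows that $X //_{t_0 - \epsilon} G \to X //_{t_0} G$ contracts $X^- //_{t_0 - \epsilon} G = (H_{0,1} \setminus H_{1,1}) //_{t_0 - \epsilon} G$ and is an isomorphism away from it.

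Finally I specialize to $d = 3$. Every smooth $(3,3)$ curve is the canonical model of a genus 4 curve on a smooth quadric, and two such pointed curves are isomorphic precisely when they differ by an element of $G$ (the $\mathbb{Z}/2$ factor accounting for the interchange of the two $g^1_3$'s); so, exactly as in the genus 3 proposition, restricting to the open set $U \subseteq X^{ss}(t_0 - \epsilon)$ of moduli-stable pointed curves and invoking the universal property of $\overline{M}_{4,1}$ yields a morphism $U //_{t_0 - \epsilon} G \to \overline{M}_{4,1}$ that is an isomorphism onto its image. The complement of $U$ lies in the discriminant, whose generic point is an irreducible nodal $(3,3)$ curve, so $U //_{t_0 - \epsilon} G$ has complement of codimension $\geq 2$ in $X //_{t_0 - \epsilon} G$. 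This produces a birational contraction $\beta : \overline{M}_{4,1} \dashrightarrow X //_{t_0 - \epsilon} G$ carrying $BN^1_{3,(0,2)}$ to the image of $H_{0,1}$; composing with the wall-crossing morphism contracts that image, and hence contracts $BN^1_{3,(0,2)}$. The main obstacle is the wall analysis of the middle two paragraphs: since $G$ has two $PSL(2,\mathbb{C})$ factors, the one-parameter subgroups vary in a genuinely two-dimensional family and the optimization in the numerical criterion is more delicate than on $\PP^2$, and identifying $X^0$ as a single closed orbit with $\mathbb{C}^*$ stabilizer is the step I expect to require the most care.
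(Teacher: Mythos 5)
This is essentially the paper's own proof: Section 5 carries out exactly your program, with the same slope parametrization, the same edge at $t = d-1$, the same chamber $(d-\frac{4}{3}, d-1)$ and wall at $t_0 = d-\frac{4}{3}$ with $S$ the locus of a smooth $(1,1)$ curve plus $d-1$ copies of the two rulings through a point $q \neq p$, the same identification $X^- = H_{0,1} \setminus H_{1,1}$, and the same $d=3$ specialization via the universal property and the discriminant-codimension argument. The only inaccuracy is your heuristic guess for $X^0$: it is not a cuspidal configuration but rather a smooth curve of bidegree $(1,2)$ (or $(2,1)$) together with $d-1$ copies of its tangent line at a tangency point and $d-2$ copies of the other ruling through that point, with $p$ the unique other tangency point --- still a single closed orbit with $\mathbb{C}^*$ stabilizer, exactly as your argument requires.
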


In a similar way to the previous section, we will construct a birational model for $\overline{M}_{4,1}$ by considering GIT quotients of the universal family over the space of curves in $\PP^1 \times \PP^1$.  Here, the Hessian $H_{0,1}$ is again the image of a pointed Brill-Noether divisor.  As above, our goal is to find a GIT quotient in which this locus is contracted.  Let $Y = \PP^1 \times \PP^1$ and
$$X = \{ (p,C) \in Y \times \vert \OO (d,d) \vert \text{  } \vert p \in C \}.$$
Then $\pi_2 : X \to \vert \OO (d,d) \vert$ is the family of all curves of bidegree $(d,d)$.  Our goal, as before, is to study the GIT quotients of $X$ by the action of $G = PSO(4, \mathbb{C} )$.  By the above, we know that $PicX \cong \Z^3$, but we are only interested in those line bundles of the form $\OO_X (a,a,b)$.  We can therefore define the slope of a line bundle $\LL \in PicX$ as above.

\begin{definition}

We say a line bundle $\LL$ has \textbf{slope} $t$ if $\LL = \pi_1^* \OO (a,a) \otimes \pi_2^* \OO (b)$ with $t = \frac{a}{b}$.  We write $X^s (t)$ and $X^{ss} (t)$ for the sets of stable and semistable points, and $X //_t G $ for the corresponding GIT quotient.

\end{definition}

Here we describe the numerical criterion for points in $X$.  Let $p = (x_0 , x_1 : y_0 , y_1 )$ and
$$ C = \sum_{i_0 + i_1 = j_0 + j_1 = d} a_{i_0 ,i_1 ,j_0 ,j_1} x_0^{i_0} x_1^{i_1} y_0^{j_0} y_1^{j_1}.$$
Then a basis for $H^0 ( \OO_X (a,a,b))$ consists of monomials of the form
$$ \prod_{\alpha_0 = 1}^a x_{l_{ \alpha_0 }} y_{m_{ \alpha_1 }} \prod_{\beta = 1}^b a_{i_{0 \beta},i_{1 \beta} , j_{0 \beta}, j_{1 \beta}}. $$
The one-parameter subgroup with weights $(-r_0 , r_0, -r_1 , r_1 )$ acts on the monomial above with weight
$$ \sum_{\beta = 1}^b ( r_0 (i_{0 \beta } - i_{1 \beta }) + r_1 (j_{0 \beta } - j_{1 \beta }) ) - \sum_{\alpha_0 =1}^a ((-1)^{l_{\alpha_0}} r_0 + (-1)^{m_{\alpha_1}} r_1 ) . $$
In our case, we will only be interested in maximizing or minimizing this weight, so it suffices to consider monomials of the form $x_l^a y_m^a a_{i_0 , i_1 , j_0 , j_1}^b$.  In this case, the one-parameter subgroup acts with weight $b(r_0 (i_0 - i_1 ) + r_1 (j_0 - j_1 )) - a((-1)^l r_0 + (-1)^m r_1)$, which is proportional to
$$ \mu_{\lambda} (x_l , y_m , a_{i_0 , i_1 , j_0 , j_1} ) := r_0 (i_0 - i_1 ) + r_1 (j_0 - j_1 ) - t((-1)^l r_0 + (-1)^m r_1). $$

As in the previous section, when $t = 0$, we obtain a moduli space of curves of bidegree $(d,d)$.  In particular, the case $d=3$ is notable for being a birational model for $\overline{M}_4$.  We will see that as $t$ increases, stable curves are allowed to have more complicated singularities, but the vanishing sequences of linear series at the marked point become more well-controlled.  We begin by identifying an edge of the $G$-ample cone corresponding to the Wronskian $W_{0,1}$.

\begin{proposition}

An edge of the $G$-ample cone occurs at $t=d-1$.

\end{proposition}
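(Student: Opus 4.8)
The plan is to mirror the proof of the corresponding edge proposition for plane curves: it suffices to show that $X^{ss}(d-1) \neq X^s(d-1) = \emptyset$. Nonemptiness of the semistable locus is immediate, since $W_{0,1}$ is a nonzero $G$-invariant section of $\OO_X(2(d-1),2(d-1),2)$, a line bundle of slope exactly $\frac{2(d-1)}{2} = d-1$; any point where it does not vanish is semistable. The substance of the argument is therefore to prove that no point is stable at $t = d-1$.

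For this I would apply Mumford's numerical criterion to an arbitrary $(p,C)$. Since $G = PSO(4,\mathbb{C})$ acts transitively on $\PP^1 \times \PP^1$, I can normalize $p = ((1:0),(1:0))$; the only constraint I will use is that $p \in C$, which forces the coefficient $a_{d,0,d,0}$ (of $x_0^d y_0^d$) to vanish. Now take the one-parameter subgroup with weights $(1,-1,1,-1)$, i.e. $r_0 = r_1 = -1$ in the notation above. Because the only nonzero coordinates of $p$ are $x_0$ and $y_0$, we have $l = m = 0$, and using $i_0 - i_1 = d - 2i_1$, $j_0 - j_1 = d - 2j_1$ together with $t = d-1$ the numerical function collapses to
$$ \mu_{\lambda}(x_0, y_0, a_{i_0,i_1,j_0,j_1}) = 2(i_1 + j_1 - 1). $$
This is nonnegative for every monomial except the one with $(i_1, j_1) = (0,0)$, namely $a_{d,0,d,0}$, which vanishes precisely because $p \in C$.

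Because $p$ has only $x_0$ and $y_0$ nonzero, every nonvanishing basis monomial at $(p,C)$ has its point-factor equal to $x_0^a y_0^a$, so the minimum weight is attained by pairing this with the smallest-weight nonzero coefficient of $C$; this is the reduction to pure monomials $x_l^a y_m^a a_{i_0,i_1,j_0,j_1}^b$ already invoked in the set-up, and it makes the displayed computation of $\mu_{\lambda}$ legitimate. Consequently $\mu_{\lambda}(p,C) \geq 0$ and $(p,C) \notin X^s(d-1)$; as $(p,C)$ was arbitrary, $X^s(d-1) = \emptyset$. The one point requiring care --- and where the argument is actually cleaner than its $\PP^2$ analogue --- is that here no normalization of the tangent direction is needed: the product structure of $\PP^1 \times \PP^1$ means that a \emph{single} 1-parameter subgroup dominates all monomials for every curve through $p$, so I must simply confirm that the inequality $\mu_{\lambda} = 2(i_1 + j_1 - 1) \geq 0$ fails only at the monomial already killed by $p \in C$, with no hidden dependence on the geometry of $C$ at $p$.
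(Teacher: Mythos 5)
Your proof is correct and is essentially the paper's own argument: the same reduction to $X^{ss}(d-1)\neq X^s(d-1)=\emptyset$, the same use of $W_{0,1}$ for nonemptiness, and the same numerical-criterion computation, differing only in the harmless normalization $p=((1:0),(1:0))$ with the subgroup $(1,-1,1,-1)$ in place of the paper's $p=(0,1:0,1)$ with $(-1,1,-1,1)$. Your explicit justification of the reduction to pure monomials $x_l^a y_m^a a_{i_0,i_1,j_0,j_1}^b$ is a nice touch but does not change the substance.
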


\begin{proof}

It suffices to show that $X^{ss}(d-1 ) \neq X^s (d-1) = \emptyset$.  It is clear that $X^{ss}(d-1) \neq \emptyset$, since $W_{0,1}$ is a $G$-invariant section of $\OO_X (2(d-1), 2(d-1), 2)$.

To show that $X^s (d-1) = \emptyset$, we invoke the numerical criterion.  Let $(p,C) \in X$.  By change of coordinates, we may assume that $p = (0,1:0,1)$.  So, in the coordinates described above, we have $a_{0,d,0,d} = 0$.

Now consider the 1-parameter subgroup with weights $(-1,1,-1,1)$.  We have
$$ \mu_{\lambda} ( x_1 , y_1 , a_{i_0 , i_1 , j_0 , j_1} ) = 2(d-1) + (i_0 - i_1 ) + (j_0 - j_1 ) $$
which is negative whenever $(i_0 - i_1 ) + (j_0 - j_1 ) < -2(d-1) = -i_0 - i_1 - j_0 - j_1 + 2$, or $i_0 + j_0 < 1$.  This only occurs when $i_0 = j_0 = 0$, in other words, when $a_{0,d,0,d}$ is nonzero.  By assumption, however, this is not the case, so $(p,C) \notin X^s (d-1)$.  Since $(p,C)$ was arbitrary, it follows that $X^s (d-1) = \emptyset$.

\end{proof}

As above, we identify the adjacent chamber in the $G$-ample cone.  It lies between the slopes corresponding to the Wronskians $W_{0,1}$ and $W_{1,1}$.  In what follows, we let $S$ denote the set of all pointed curves $(p,C)$ admitting the following description:  $C$ consists of a smooth curve of bidegree $(1,1)$ together with $d-1$ copies of the two lines through a point $q \neq p$ on $C$.  Notice that $S \subset H_{1,1}$.

\begin{proposition}

For any $t \in (d- \frac{4}{3} , d-1)$, $X^s (t) = X^{ss} (t) = X \backslash ( H_{0,1} \cup S)$.

\end{proposition}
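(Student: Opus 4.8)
The plan is to follow the template established by the analogous Proposition in the genus 3 section (the case $Y = \PP^2$), adapting the numerical-criterion computations to the $\PP^1 \times \PP^1$ setting with the weight function $\mu_\lambda$ defined above. The statement is an equality of sets, so I would prove it by a chain of inclusions: first $X^{ss}(t) \subseteq X \backslash H_{0,1}$, then $X^{ss}(t) \subseteq X \backslash S$, and finally the reverse containment $X \backslash (H_{0,1} \cup S) \subseteq X^s(t)$. Since $X^s(t) \subseteq X^{ss}(t)$ always, these three inclusions together force the two sets of (semi)stable points to coincide and to equal $X \backslash (H_{0,1} \cup S)$.

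For the first inclusion, I would take $(p,C) \in H_{0,1}$ and normalize coordinates so that $p = (0,1:0,1)$, so that the two rulings through $p$ are $x_0 = 0$ and $y_0 = 0$. Membership in $H_{0,1}$ means (by the description of $W_{0,1}$ in section 3) that $C$ meets one of these two lines with multiplicity $\geq 2$ at $p$, or that $p$ is singular; translated into the coefficients this kills the terms $a_{0,d,0,d}$ together with $a_{1,d-1,0,d}$ or $a_{0,d,1,d-1}$. I would then exhibit a single one-parameter subgroup of the form $(-r_0, r_0, -r_1, r_1)$ — by analogy with the $(-5,1,4)$ subgroup used for the flex case in genus 3, something like asymmetric weights that load the destabilizing monomial — and verify via the inequality $t > d - \tfrac{4}{3}$ that $\mu_\lambda(x_l, y_m, a_{i_0,i_1,j_0,j_1}) > 0$ on all surviving monomials, so that $(p,C)$ is unstable. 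For the $S$ inclusion I would put $C$ in its normal form (a $(1,1)$-conic plus $d-1$ copies of each line through $q$) and use the symmetric subgroup $(-1,1,-1,1)$, exactly paralleling the genus-3 argument where $(-1,0,1)$ destabilized the analogous locus $S$.

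The reverse inclusion is the substantive part and the \textbf{main obstacle}. Here I would assume $(p,C) \notin X^s(t)$, pick a destabilizing subgroup with weights $(-r_0, r_0, -r_1, r_1)$, normalize so that $r_0 \geq 0$, $r_1 \geq 0$ (reducing to a wedge of the weight space by the symmetries of $PSO(4)$), and then case-split according to the position of $p$ relative to the four rulings $x_0=0$, $x_1=0$, $y_0=0$, $y_1=0$. In the genus-3 proof there were three cases keyed to whether $p$ lay on the special lines; here the product structure means I expect roughly three regimes — $p$ a torus-fixed point, $p$ on exactly one ruling through a fixed point, and $p$ on none — and within each I must show the vanishing of coefficients forced by $\mu_\lambda > 0$ pins $(p,C)$ into either $H_{0,1}$ or $S$. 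The delicate point is that the boundary slope $d - \tfrac{4}{3}$ (coming from $W_{1,1}$) is what separates the ``$C$ degenerates to a union of lines / lands in $H_{0,1}$'' outcome from the ``$C$ is forced into the normal form of $S$'' outcome, so the arithmetic comparing $t r_i$ against linear combinations of $r_0, r_1$ must be managed carefully to land exactly on $S$ rather than a larger locus; getting every case's threshold to cohere at $t > d - \tfrac{4}{3}$ is where the real bookkeeping lies. The symmetry of $\PP^1 \times \PP^1$ under swapping the two factors should let me halve the case analysis, which I would invoke to streamline the write-up.
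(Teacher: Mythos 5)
Your proposal is correct and follows essentially the same route as the paper's own proof: the same three inclusions, the same normalizations and normal forms, the same style of destabilizing one-parameter subgroups (the paper takes $(-1,1,-2,2)$ for the $H_{0,1}$ inclusion and the symmetric $(1,-1,1,-1)$ for $S$, matching your proposed weights up to inversion), and the same contrapositive case analysis keyed to the position of $p$ relative to the coordinate lines. The one miscalibration is the hope that the factor-swap involution halves the final case analysis: since the swap also exchanges $r_0$ and $r_1$, it can be spent either on ordering the weights (as the paper does, which is why its Cases 2 and 3 -- $p$ on $y_0=0$ only versus $p$ on $x_0=0$ only -- require genuinely different arguments) or on normalizing the position of $p$, but not both, so the total bookkeeping is the same four cases however you organize it.
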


\begin{proof}

We first show that $X^{ss} (t) \subseteq X \backslash H_{0,1}$.  Suppose that $(p,C) \in H_{0,1}$.  As before, by change of coordinates, we may assume that $p = (0,1:0,1)$.  Since $(p,C) \in H_{0,1}$, $C$ intersects one of the two lines through $p$ with multiplicity at least 2.  Without loss of generality, we may assume this line to be $x_0 = 0$.  Thus, if we write $C$ as above, then $a_{0,d,0,d} = a_{0,d,1,d-1} = 0$.  Now, consider the 1-parameter subgroup with weights $(-1,1,-2,2)$.  Then
$$ \mu_{\lambda} ( x_1 , y_1 , a_{i_0 , i_1 , j_0 , j_1} ) = 3t + i_0 - i_1 + 2j_0 - 2j_1 > 3d-4 + i_0 - i_1 + 2j_0 - 2j_1$$
$$ = 2(i_0 + 2j_0 - 2)$$
which is non-negative when $i_0 + 2j_0 \geq 2$.  Since, by assumption, $C$ has no non-zero terms with both $i_0 \leq 1$ and $j = 0$, we see that $(p,C) \notin X^{ss} (t)$.

Next we show that $X^{ss} (t) \subseteq X \backslash S$.  Suppose that $(p,C) \in S$.  Without loss of generality, we may assume that $C$ is of the form
$$ C = x_0^{d-1} y_0^{d-1} ( a_{d,0,d,0} x_0 y_0 + a_{d-1,1,d,0} x_1 y_0 + a_{d,0,d-1,1} x_0 y_1 ).$$
Now, consider the 1-parameter subgroup with weights $(1,-1,1,-1)$.  Then
$$ \mu_{\lambda} ( x_l , y_m , a_{i_0 , i_1 , j_0 , j_1} ) \geq -2t - i_0 + i_1 - j_0 + j_1 > -2d+2 - i_0 + i_1 - j_0 + j_1 $$
$$ = -2(i_1 + j_1 - 1)$$
which is non-negative when $i_1 + j_1 \leq 1$.  It follows that $(p,C) \notin X^{ss} (t)$.

Now we show that $X \backslash (H_{0,1} \cup S) \subseteq X^s (t)$.  Suppose that $(p,C) \notin X^s (t)$.  Then there is a nontrivial 1-parameter subgroup that acts on $(p,C)$ with strictly positive weight.  By change of basis, we may assume that this subgroup acts with weights $(-r_0 , r_0 , -r_1, r_1 )$, with $0 \leq r_0 \leq r_1$ and $r_1 > 0$.  We then have
$$ \mu_{\lambda} (x_l , y_m , a_{i_0 , i_1 , j_0 , j_1} ) = r_0 (i_0 - i_1 ) + r_1 (j_0 - j_1 ) - t((-1)^l r_0 + (-1)^m r_1) > 0 $$
We divide this into four cases, depending on $p$.

\textbf{Case 1 -- $p = (0,1:0,1)$:}  In this case, $l = m = 1$.  We have $t(-r_0 - r_1 ) > (d-1)(-r_0 - r_1 ) \geq -(d-2)r_0 - dr_1$.  It follows that $a_{0,d,0,d} = a_{1,d-1,0,d} = 0$, so $(p,C) \in H_{0,1}$.

\textbf{Case 2 -- $p$ lies on the line $y_0 = 0$, but not the line $x_0 = 0$:}  In this case, $l = 1$ and $m = 0$.  Here, $t(-r_0 + r_1 ) > (d-2)(-r_0 + r_1 ) \geq -dr_0 + kr_1$ for all $k \leq d-2$.  We therefore see that $a_{0,d,k,d-k} = 0$ for all $k \leq d-2$.  If $a_{0,d,d,0} \neq 0$, then every point of $C$ that lies on the line $x_0 = 0$ also lies on the line $y_0 = 0$, a contradiction.  We therefore see that $a_{0,d,d,0} = 0$ as well, but this means that $p$ lies on a linear component of $C$, and therefore $(p,C) \in H_{0,1}$.

\textbf{Case 3 -- $p$ lies on the line $x_0 = 0$, but not on the line $y_0 = 0$:}  In this case, $l = 0$ and $m = 1$.  Note that $t(r_0 - r_1 ) > d(-r_0 + r_1 ) \geq -dr_0 + kr_1$ for all $k \leq d$.  It follows that $a_{k,d-k,0,d} = 0$ for all values of $k$, which means that $y_0 = 0$ is a linear component of $C$.  Thus $(p,C) \in H_{0,1}$.

\textbf{Case 4 -- $p$ does not lie on either of the lines $x_0 = 0$ or $y_0 = 0$:}  In this case, $l = m = 0$.  Now note that $t(r_0 + r_1 ) > (d-2)(r_0 + r_1 )$, so $a_{k_0 ,d-k_0 ,k_1 ,d-k_1} = 0$ if $k_0$ and $k_1$ are both less than $d$.  Furthermore, since $r_0 \leq r_1$, $(d-2)(r_0 + r_1 ) \geq dr_0 + (d-4)r_1$, so $a_{d,0,k,d-k} = 0$ for $k \leq d-2$.  Now, if $(d- \frac{4}{3} ) (r_0 + r_1 ) \leq (d-4)r_0 + dr_1$, then $2r_0 \leq r_1$, so $t(r_0 + r_1 ) > (d- \frac{4}{3} )(r_0 + r_1 ) \geq dr_0 + (d-2)r_1$.  It follows that either $a_{d,0,d-1,1} = 0$, in which case $C$ is a union of $2d$ lines and hence $(p,C) \in H_{0,1}$, or $a_{k,d-k,d,0} = 0$ for all $k \leq d-2$, in which case $C \in S$.

\end{proof}

We now consider the wall in the $G$-ample cone determined by the Wronskian $W_{1,1}$.

\begin{proposition}

A wall of the $G$-ample cone occurs at $t = d- \frac{4}{3}$.  More specifically, $X^{ss} (t) = X \backslash ((H_{0,1} \cap H_{1,1}) \cup S)$, and $X^s (t) \subseteq X \backslash (H_{0,1} \cup S)$.

\end{proposition}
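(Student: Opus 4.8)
The plan is to follow the template of the two preceding propositions, with $W_{1,1}$ now playing the role that $W_2'$ played in the genus-$3$ wall argument. I would first dispose of the containments that come for free. Since $W_{1,1}$ is a $G$-invariant section of $\OO_X(2(3d-4),2(3d-4),6)$, whose slope is exactly $d-\frac{4}{3}$, any $(p,C)\notin H_{1,1}$ is semistable, so $X\setminus H_{1,1}\subseteq X^{ss}(t)$. Combining this with the general variation-of-GIT inclusions $X^{ss}(t+\epsilon)\subseteq X^{ss}(t)$ and $X^s(t)\subseteq X^s(t+\epsilon)$ and the previous proposition's identification $X^{ss}(t+\epsilon)=X^s(t+\epsilon)=X\setminus(H_{0,1}\cup S)$, I obtain $X^s(t)\subseteq X\setminus(H_{0,1}\cup S)$ at once, and, using $S\subseteq H_{1,1}$,
$$X\setminus((H_{0,1}\cap H_{1,1})\cup S)=(X\setminus H_{1,1})\cup(X\setminus(H_{0,1}\cup S))\subseteq X^{ss}(t).$$

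The real content is the reverse inclusion $X^{ss}(t)\subseteq X\setminus((H_{0,1}\cap H_{1,1})\cup S)$, i.e.\ that every point of $(H_{0,1}\cap H_{1,1})\cup S$ is unstable at the wall. For $(p,C)\in S$ I would reuse the one-parameter subgroup $(1,-1,1,-1)$ from the previous proposition: its computation only used $t<d-1$, which still holds at $t=d-\frac{4}{3}$, so it again witnesses $(p,C)\notin X^{ss}(t)$. The main step is to understand $H_{0,1}\cap H_{1,1}$ geometrically. Normalizing $p=(0,1:0,1)$ and passing to the affine chart, a local computation shows that at a smooth point $(p,C)\in H_{0,1}$ forces the tangent line to be one of the two rulings through $p$, and that for such a point $(p,C)\in H_{1,1}$ holds precisely when that ruling meets $C$ with multiplicity at least $3$ (the union of the two rulings, a $(1,1)$-curve, then meeting $C$ to order $4$). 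Together with the fact that every singular point of $C$ lies in both divisors, this identifies $H_{0,1}\cap H_{1,1}$ as the union of the locus where $p$ is a singular point of $C$ and the locus where $p$ is smooth with tangent a ruling meeting $C$ to order $\ge 3$ --- exactly the analogue of the ``singular point or hyperflex'' description of $H_1\cap H_2'$ in genus $3$.

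I would then destabilize each stratum with the numerical criterion. If $p$ is singular then $a_{0,d,0,d}=a_{1,d-1,0,d}=a_{0,d,1,d-1}=0$, and the subgroup $(-1,1,-1,1)$ gives $\mu_\lambda=2(i_0+j_0)-\frac{8}{3}$, which is strictly positive on every surviving monomial since these all have $i_0+j_0\ge 2$; hence $(p,C)\notin X^{ss}(t)$. If $p$ is smooth with tangent the ruling $y_0=0$ meeting $C$ to order $\ge 3$, then $a_{0,d,0,d}=a_{1,d-1,0,d}=a_{2,d-2,0,d}=0$, and the subgroup $(-1,1,-3,3)$ gives $\mu_\lambda=2i_0+6j_0-\frac{16}{3}$; the only monomials with $\mu_\lambda\le 0$ are the three vanishing ones $(i_0,j_0)\in\{(0,0),(1,0),(2,0)\}$, while every other monomial --- in particular the tangent term $(i_0,j_0)=(0,1)$, for which $\mu_\lambda=\frac{2}{3}>0$ --- is strictly positive, so again $(p,C)\notin X^{ss}(t)$ (if the tangent is instead the ruling $x_0=0$, the symmetric subgroup $(-3,3,-1,1)$ serves the same purpose). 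This yields the displayed equality. Finally, $t=d-\frac{4}{3}$ is genuinely a wall: a generic point of $H_{0,1}$, where the tangent ruling meets $C$ with multiplicity exactly $2$, lies in $X^{ss}(t)$ but, being in $H_{0,1}$, not in $X^s(t)$, so $X^s(t)\subsetneq X^{ss}(t)$.

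The step I expect to be the main obstacle is the geometric identification of $H_{0,1}\cap H_{1,1}$ at a smooth point and the matching choice of destabilizing subgroup. The subgroup $(-1,1,-2,2)$ that worked throughout the chamber becomes only borderline at the wall --- the tangent monomial forces the minimal weight to be exactly $0$ --- so one must pass to a steeper subgroup such as $(-1,1,-3,3)$ and verify that the single extra vanishing $a_{2,d-2,0,d}=0$ supplied by $H_{1,1}$ is precisely what raises the minimal weight from $0$ to a strictly positive number, thereby separating ``not stable'' (generic $H_{0,1}$) from ``not semistable'' ($H_{0,1}\cap H_{1,1}$).
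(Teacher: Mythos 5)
Your proposal is correct, and its overall skeleton matches the paper's: the free containment $X\setminus((H_{0,1}\cap H_{1,1})\cup S)\subseteq X^{ss}(t)$ via the $G$-invariant section $W_{1,1}$ of slope $d-\frac{4}{3}$ combined with the variation-of-GIT inclusions and $S\subseteq H_{1,1}$, the destabilization of $S$ by the same subgroup used in the chamber (which, as you note, only needs $t<d-1$), and the numerical criterion applied to $H_{0,1}\cap H_{1,1}$. Where you genuinely depart from the paper is at the crux destabilization step. The paper does not split into geometric cases with separate integer subgroups: it keeps the single subgroup $(-1,1,-2,2)$, observes that at the wall it gives $\mu_\lambda=2(i_0+2j_0-2)\geq 0$ on all of $H_{0,1}$ with equality possible only at the two monomials $a_{2,d-2,0,d}$ and $a_{0,d,1,d-1}$, notes that membership in $H_{1,1}$ forces one of these two coefficients to vanish, and then perturbs to $(-1-\epsilon,1+\epsilon,-2,2)$, choosing the sign of $\epsilon$ according to which coefficient vanishes, so that every surviving weight becomes strictly positive. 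Your route replaces this perturbation by the explicit dichotomy ($p$ singular, killed by $(-1,1,-1,1)$ with weight $2(i_0+j_0)-\frac{8}{3}$, versus tangent ruling of contact at least $3$, killed by $(-1,1,-3,3)$ with weight $2i_0+6j_0-\frac{16}{3}$); both of your weight computations check out, and your dichotomy is exactly the paper's ``either $a_{0,d,1,d-1}=0$ or $a_{2,d-2,0,d}=0$'' in geometric language. What your version buys is cleaner bookkeeping (honest integer one-parameter subgroups, no $\epsilon$, the singular/hyperflex-analogue geometry made explicit, in tighter parallel with the genus-$3$ analysis of $H_1\cap H_2'$); what the paper's version buys is economy and continuity with what follows, since its destabilizing subgroups stay arbitrarily close to the distinguished subgroup $(-1,1,-2,2)$, which reappears in the next corollary as the $\mathbb{C}^*$-stabilizer of the closed orbit $X^0$ governing the wall crossing. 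You also make explicit the final point, implicit in the paper, that a generic point of $H_{0,1}\setminus H_{1,1}$ is semistable but not stable, so that $t=d-\frac{4}{3}$ is indeed a wall.
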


\begin{proof}

First, notice that if $(p,C) \notin H_{1,1}$, then $(p,C) \in X^{ss} (t)$, since $W_{1,1}$ is a $G$-invariant section of $\OO_X (6(d- \frac{4}{3} ), 6(d- \frac{4}{3} ), 6)$ that does not vanish at $(p,C)$.   Moreover, by general variation of GIT we know that, when passing from a chamber to a wall, we have
$$X^{ss} (t + \epsilon ) \subseteq X^{ss} (t)$$
$$X^s (t) \subseteq X^s (t + \epsilon )$$
Thus, $X^s (t) \subseteq X \backslash (H_{0,1} \cup S)$ and $X \backslash ((H_{0,1} \cap H_{1,1}) \cup S) = X^{ss} (t)$.

Now, suppose that $(p,C) \in S$.  Using the same argument as before with the same 1-parameter subgroup, we see that $(p,C) \notin X^{ss} (t)$.

Next, suppose that $(p,C) \in H_{0,1}$.  In this case, we again consider the 1-parameter subgroup with weights $(-1,1,-2,2)$.  As before, we have
$$ \mu_{\lambda} ( x_1 , y_1 , a_{i_0 , i_1 , j_0 , j_1} ) = 3d-4 + i_0 - i_1 + 2j_0 - 2j_1 = 2(i_0 + 2j_0 - 2)$$
which is non-negative when $i_0 + 2j_0 \geq 2$.  Since, by assumption, $C$ has no non-zero terms with both $i_0 \leq 1$ and $j = 0$, we see that $(p,C) \notin X^s (t)$.

Notice furthermore that if $(p,C) \in H_{0,1} \cap H_{1,1}$, this means that the osculating $(1,1)$ curve to $C$ at $p$ is the pair of lines through that point, and this curve intersects $C$ with multiplicity at least 4.  This means that either $a_{0,d,1,d-1} = 0$ or $a_{2,d-2,0,d} = 0$, which implies that the expression $i_0 + 2j_0 - 2$ above is zero for at most one term, and strictly positive for all of the others.  Now consider the 1-parameter subgroup with weights $(-1 - \epsilon , 1 + \epsilon , -2,2)$.  For $\epsilon > 0$, we see that any curve with $a_{0,d,1,d-1} = 0$ is unstable.  Conversely, if $\epsilon < 0$, we see that any curve with $a_{2,d-2,0,d} = 0$ is unstable.  It follows that $(p,C) \notin X^{ss} (t)$, and thus $X^{ss} (t) = X \backslash ((H_{0,1} \cap H_{1,1}) \cup S)$.

\end{proof}

Again, we want to use Theorem \ref{Thaddeus} to study the wall crossing at $t_0 = d- \frac{4}{3}$.  Again, we let
$$X^{\pm} = X^{ss} (t_0) \backslash X^{ss} (t_0 \mp \epsilon )$$
$$X^0 = X^{ss} (t_0 ) \backslash (X^{ss} (t_0 + \epsilon ) \cup X^{ss} (t_0 - \epsilon ))$$
and determine $X^-$ and $X^0$.

\begin{proposition}

With the set-up above, $X^- = H_{0,1} \backslash H_{1,1}$.  $X^0$ is the set of all pointed curves $(p,C)$ admitting the following description:  $C$ consists of a smooth curve of bidegree $(1,2)$ (or $(2,1)$), together with $d-1$ copies of the tangent line to this curve through a point that has a tangent line, and $d-2$ copies of the other line through this same point.  The marked point $p$ is the unique other point on the smooth $(1,2)$ curve that has a tangent line.

\end{proposition}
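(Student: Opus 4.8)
The plan is to mirror exactly the structure of the genus-3 analogue (the proposition determining $X^-$ and $X^0$ for plane curves). The claim $X^- = H_{0,1} \backslash H_{1,1}$ is immediate from the two preceding propositions: we have already established that $X^{ss}(t_0) = X \backslash ((H_{0,1} \cap H_{1,1}) \cup S)$ and that $X^{ss}(t_0 + \epsilon) = X \backslash (H_{0,1} \cup S)$, so by definition $X^- = X^{ss}(t_0) \backslash X^{ss}(t_0 + \epsilon) = H_{0,1} \backslash H_{1,1}$. The real content is the explicit identification of $X^0$.

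To pin down $X^0$, I would take an arbitrary $(p,C) \in X^0$ and argue as in the genus-3 case. Since $X^0 \subseteq X^-$, the point $p$ is smooth on $C$ and lies on exactly one of the two rulings with contact multiplicity exactly $2$ (reflecting membership in $H_{0,1} \backslash H_{1,1}$). Since $(p,C) \notin X^{ss}(t_0 - \epsilon)$, there is a nontrivial one-parameter subgroup acting with strictly positive weight; normalize it to weights $(-r_0, r_0, -r_1, r_1)$ with $0 \leq r_0 \leq r_1$, $r_1 > 0$. Then I would run the same four-case analysis by the location of $p$ relative to the lines $x_0 = 0$ and $y_0 = 0$, using the numerical criterion $\mu_\lambda(x_l, y_m, a_{i_0,i_1,j_0,j_1}) = r_0(i_0 - i_1) + r_1(j_0 - j_1) - t((-1)^l r_0 + (-1)^m r_1) > 0$. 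Cases 1 through 3 should be ruled out exactly as before: they each force either $p$ to lie on a linear component or the tangency behavior at $p$ to be incompatible with $(p,C) \in H_{0,1} \backslash H_{1,1}$. The surviving Case 4 (where $p$ avoids both $x_0 = 0$ and $y_0 = 0$, so $l = m = 0$) is where the destabilizing configuration must live, and here I would extract sharp inequalities on the ratio $r_0/r_1$ by comparing $t_0 = d - \tfrac{4}{3}$ against the relevant weight thresholds, successively forcing monomial coefficients $a_{i_0,i_1,j_0,j_1}$ to vanish until $C$ is pinned down to the stated normal form.

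The expected normal form is a curve $C$ supported on a residual smooth $(1,2)$ (or $(2,1)$) curve together with $d-1$ copies of one ruling line and $d-2$ copies of the other through a single point $q$, with the marked point $p$ the distinguished second point of the $(1,2)$ curve admitting a tangent line. After deriving that $C$ has this shape, I would verify the converse: that such a $(p,C)$ indeed lies in $X^-$ (the contact condition at $p$ holds) and is destabilized at $t_0 - \epsilon$ by the relevant one-parameter subgroup, with the final identification following from the fact that all such configurations are projectively equivalent under $G = PSO(4,\mathbb{C})$.

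\textbf{The main obstacle} I anticipate is Case 4: unlike the symmetric $\PP^2$ situation, the $(1,2)$ versus $(2,1)$ asymmetry means the weights $r_0$ and $r_1$ enter unequally, so the chain of vanishing deductions requires carefully tracking which ruling absorbs $d-1$ versus $d-2$ copies and correctly locating the threshold ratios (analogous to the $r_0 < \tfrac{5}{2}r_1$ and $(d - \tfrac{12}{5})$ bounds in the genus-3 proof). Getting these numerical cutoffs exactly right—so that the residual cubic-degree data assembles into a genuine smooth $(1,2)$ curve rather than a degenerate configuration—is the delicate step, and I would double-check it against the known class of the destabilized divisor and the requirement that the resulting $X^0$ form a single $G$-orbit.
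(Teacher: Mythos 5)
Your outline follows the paper's own proof step for step: the identification $X^- = H_{0,1}\backslash H_{1,1}$ from the two preceding propositions, the observation that $X^0 \subseteq X^-$ forces exactly one of the two lines through $p$ to meet $C$ with multiplicity exactly $2$, the four-case analysis by position of $p$ with a destabilizing subgroup normalized to weights $(-r_0,r_0,-r_1,r_1)$, $0 \leq r_0 \leq r_1$, and the closing appeal to the fact that the limiting configurations form a single $G$-orbit. So there is no divergence of method.

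The problem is that, as written, the proposal defers rather than performs the steps that constitute the actual content of the proposition, so it has genuine gaps. First, the Case 4 derivation of the normal form is exactly what the proposition asserts, and you flag it as ``the main obstacle'' without resolving it; the paper's execution runs: $t(r_0+r_1) > (d-2)(r_0+r_1)$ kills $a_{k_0,d-k_0,k_1,d-k_1}$ for $k_0,k_1 < d$, and $r_0 \leq r_1$ kills $a_{d,0,k,d-k}$ for $k < d-1$; then the threshold comparison at $t_0 = d-\frac{4}{3}$ (if $(d-\frac{3}{2})(r_0+r_1) \leq (d-6)r_0 + dr_1$ then $3r_0 \leq r_1$) forces either $a_{d,0,d-1,1} = 0$, giving a union of $2d$ lines which is impossible, or $a_{k,d-k,d,0} = 0$ for $k < d-2$, which is precisely the stated normal form $C = x_0^{d-2}y_0^{d-1}(a_{d,0,d,0}x_0^2y_0 + a_{d,0,d-1,1}x_0^2y_1 + a_{d-1,1,d,0}x_0x_1y_0 + a_{d-2,2,d,0}x_1^2y_0)$. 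Without some such chain of inequalities the claimed shape of $C$ is simply assumed. Second, the single-orbit statement is asserted as ``the fact that all such configurations are projectively equivalent under $G$,'' but this is not a quotable fact and the paper must prove it: fixing the two tangency points of the $(1,2)$ curve at $(1,0:1,0)$ and $(0,1:0,1)$, the curve is determined by its third intersection with the diagonal, and $PSL(2,\mathbb{C})$ acts $3$-transitively on $\PP^1$. This orbit argument is what upgrades the inclusion ``every point of $X^0$ has the stated form'' to the equality claimed in the proposition, so it cannot be waved through. Third, the converse verification that a normal-form curve actually lies in $X^0$ --- it lies in $X^-$ because a line cannot meet a smooth $(2,1)$ curve with multiplicity greater than $2$, and it is destabilized at $t_0 - \epsilon$ by the subgroup with weights $(-1,1,-2,2)$ --- is mentioned in your plan but likewise not carried out.
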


\begin{proof}

We have already seen that $X^{ss} (t_0) = X \backslash ((H_{0,1} \cap H_{1,1}) \cup S)$ and $X^{ss} (t_0 + \epsilon ) = X \backslash (H_{0,1} \cup S)$.  Thus, $X^- = H_{0,1} \backslash H_{1,1}$.

To prove the statement about $X^0$, let $(p,C) \in X^0$.  Notice that, since $X^0 \subseteq X^-$, exactly one of the two lines through $p$ intersects $C$ with multiplicity exactly 2.  Since $(p,C) \notin X^{ss} (t_0 - \epsilon )$, there must be a nontrivial 1-parameter subgroup that acts on $(p,C)$ with strictly positive weight.  Again we assume that this subgroup acts with weights $(-r_0 , r_0 , -r_1, r_1 )$, with $0 \leq r_0 \leq r_1$ and $r_1 > 0$.  Again we have
$$ \mu_{\lambda} (x_l , y_m , a_{i_0 , i_1 , j_0 , j_1} ) = r_0 (i_0 - i_1 ) + r_1 (j_0 - j_1 ) - t((-1)^l r_0 + (-1)^m r_1) > 0 $$
We divide this into four cases, depending on $p$.

\textbf{Case 1 -- $p = (0,1:0,1)$:}  In this case, $l = m = 1$.  Again we have $t(-r_0 - r_1 ) > (d-1)(-r_0 - r_1 ) \geq -(d-2)r_0 - dr_1$.  Now, if $t(-r_0 - r_1 ) \leq -dr_0 - (d-2)r_1$, then $(d - \frac{4}{3})(-r_0 - r_1 ) < -dr_0 - (d-2)r_1$, so $r_1 > 2r_0$.  This means that $t(-r_0 - r_1 ) < (d - \frac{4}{3})(-r_0 - r_1 ) < -(d-4)r_0 - dr_1$.  It follows that $a_{0,d,0,d} = a_{1,d-1,0,d} = 0$, and either $a_{0,d,1,d-1} = 0$ or $a_{2,d-2,0,d} = 0$.  But we know that exactly one of the two lines through $p$ intersects $C$ with multiplicity exactly 2, so neither of these is a possibility.

\textbf{Case 2 -- $p$ lies on the line $y_0 = 0$, but not the line $x_0 = 0$:}  Following the same argument as above we see that either $p$ lies on a linear component of $C$, or every point of $C$ that lies on the line $x_0 = 0$ also lies on the line $y_0 = 0$.  It follows that $(p,C) \notin X^-$, a contradiction.

\textbf{Case 3 -- $p$ lies on the line $x_0 = 0$, but not on the line $y_0 = 0$:}  Again, following the same argument as above we see that $p$ lies on a linear component of $C$.  This implies that $(p,C) \notin X^-$, which is impossible.

\textbf{Case 4 -- $p$ does not lie on either of the lines $x_0 = 0$ or $y_0 = 0$:}  In this case, $l = m = 0$.  As above, we see that $a_{k_0 ,d-k_0 ,k_1 ,d-k_1} = 0$ if $k_0$ and $k_1$ are both less than $d$, and $a_{d,0,k,d-k} = 0$ for $k < d-1$.  Now, if $(d - \frac{3}{2})(r_0 + r_1 ) \leq (d-6)r_0 + dr_1$, then $3r_0 \leq r_1$, so $t(r_0 + r_1 ) > (d - \frac{3}{2})(r_0 + r_1 ) \geq dr_0 + (d-2)r_0$.  It follows that either $a_{d,0,d-1,1} = 0$, in which case $C$ is a union of $2d$ lines, which is impossible, or $a_{k,d-k,d,0} = 0$ for all $k < d-2$.  We therefore see that $C$ is of the form
$$ C = x_0^{d-2}y_0^{d-1} (a_{d,0,d,0}x_0^2 y_0 + a_{d,0,d-1,1}x_0^2 y_1 + a_{d-1,1,d,0}x_0 x_1 y_0 + a_{d-2,2,d,0}x_1^2 y_0 ). $$
Thus, $C$ consists of three components.  One is a curve of bidegree $(2,1)$.  The other two components consist of multiple lines through one of the points on this curve that has a tangent line.  The point $p$ is forced to be the unique other such point.

It is clear that this $(p,C) \in X^{-}$, since by definition, one of the lines through $p$ intersects $C$ with multiplicity greater than 1, and it is impossible for it to intersect a smooth curve of bidegree $(2,1)$ with higher multiplicity than 2, or for the other line through $p$ to intersect the curve with multiplicity at all.  To see that $(p,C) \notin X^{ss}(t_0 - \epsilon )$, consider the 1-parameter subgroup with weights $(-1,1,-2,2)$.

Finally, notice that all such curves are in the same orbit of the action of $G$, so $X^0$ must be the set of \emph{all} such curves.  To see this, note that if we fix the two points that have tangent lines to be $(1,0:1,0)$ and $(0,1:0,1)$, then the curve is determined uniquely by the third point of intersection of the curve with the diagonal.  Since $PSL(2, \mathbb{C} )$ acts 3-transitively on points of $\PP^1$, we obtain the desired result.

\end{proof}

\begin{corollary}

The map $X //_{t_0 - \epsilon} G \to X //_{t_0} G$ contracts the locus $H_{0,1} \backslash H_{1,1}$ to a point.  Outside of this locus, the map is an isomorphism.

\end{corollary}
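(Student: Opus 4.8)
The plan is to check the three hypotheses of Theorem~\ref{Thaddeus} at a point $x=(p,C)\in X^0$ and then read off the contraction, in direct analogy with the genus $3$ corollary. By the preceding proposition every pointed curve in $X^0$ lies in a single $G$-orbit, so $G\cdot(p,C)=X^0$. Since $X^{ss}(t_0+\epsilon)$ and $X^{ss}(t_0-\epsilon)$ are open subsets of $X^{ss}(t_0)$, the set $X^0=X^{ss}(t_0)\setminus(X^{ss}(t_0+\epsilon)\cup X^{ss}(t_0-\epsilon))$ is closed in $X^{ss}(t_0)$; hence the orbit $G\cdot(p,C)$ is closed there and $X^0//G$ is a single point. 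Moreover, since $\OO(d,d)$ is base-point free, $X$ is a projective bundle over $Y$ and is therefore smooth, so $x$ is a smooth point of $X$.

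Next I would compute the stabilizer $G_x$ and show $G_x\cong\mathbb{C}^*$. The curve $C$ has three distinguishable pieces: the smooth $(2,1)$ component $C'$, the $(d-1)$-fold tangent line $\ell_1$ to $C'$ at the distinguished point $q$, and the $(d-2)$-fold second ruling line $\ell_2$ through $q$. Any $g\in G$ fixing $(p,C)$ must preserve each piece, since their bidegrees and multiplicities differ; because $C'$ has bidegree $(2,1)$ it cannot be preserved by the involution exchanging the two factors of $\PP^1\times\PP^1$, so $g=(\phi,\psi)$ lies in the connected subgroup $PSL(2)\times PSL(2)$ and preserves both rulings. The lines $\ell_1,\ell_2$ meet at $q$, so $g$ fixes $q$, and $g$ fixes $p$ by hypothesis. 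Writing $C'$ as the graph of the degree $2$ map $f\colon\PP^1\to\PP^1$ induced by the two projections, the condition $g(C')=C'$ becomes $\psi\circ f=f\circ\phi$, while fixing $p$ and $q$ forces $\phi$ to fix the two critical points of $f$ and $\psi$ to fix the two critical values. Normalizing $f(z)=z^2$, these constraints read $\phi(z)=az$ and $\psi(w)=bw$ with $b=a^2$, so $G_x\cong\mathbb{C}^*$; the multiple lines impose no further condition, as $\ell_1$ is the tangent to $C'$ at the fixed point $q$ and $\ell_2$ is the fiber through $q$, both determined by data that $g$ already preserves.

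Finally I would invoke Theorem~\ref{Thaddeus}: since $x$ is smooth, $G\cdot x$ is closed in $X^{ss}(t_0)$, and $G_x\cong\mathbb{C}^*$, the map $X//_{t_0-\epsilon}G\to X//_{t_0}G$ is an isomorphism away from the image of $X^-$, and over the point $X^0//G$ the locus $X^-//G$ is a single weighted projective space. As the previous proposition identifies $X^-=H_{0,1}\setminus H_{1,1}$, this exhibits the image of $H_{0,1}\setminus H_{1,1}$ as contracted to the point $X^0//G$, with the map an isomorphism elsewhere. I expect the main obstacle to be the stabilizer computation: one must verify that preserving the $(2,1)$ curve together with the two fixed ramification points cuts the evident two-dimensional torus down to exactly a one-dimensional $\mathbb{C}^*$ with no extra finite components, in particular that the deck involution $z\mapsto -z$ already lies in the connected torus while the ramification-exchanging symmetry $z\mapsto 1/z$ is excluded precisely because $p$ must be fixed.
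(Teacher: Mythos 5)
Your proposal is correct and follows essentially the same route as the paper: both verify the hypotheses of Theorem~\ref{Thaddeus} by using the preceding proposition to see that $X^0$ is a single closed orbit (hence $X^0//G$ is a point), computing the stabilizer to be $\mathbb{C}^*$, and then reading off the contraction of $X^- = H_{0,1}\setminus H_{1,1}$. Your stabilizer computation via the graph of $z\mapsto z^2$ is just a coordinate-free version of the paper's "quick check" that the pairs of diagonal matrices preserving $C$ form the one-parameter subgroup with weights $(-1,1,-2,2)$, and your explicit verifications of the smoothness of $X$ and the closedness of $X^0$ in $X^{ss}(t_0)$ fill in details the paper leaves implicit.
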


\begin{proof}

Let $C = x_1^{d-2}y_1^{d-1}(x_0^2 y_1 + x_1^2 y_0 )$, and $p = (0,1:0,1)$.  Then $(p,C) \in X^0$.  As we have seen, $X^0$ is the orbit of $(p,C)$, so $G \cdot (p,C)$ is closed in $X^{ss}(t_0)$ and $X^0 //_{t_0} G$ is a point.  Notice that the stabilizer of $(p,C)$ must fix $p = (0,1:0,1)$, and the other ramification point, which is $(1,0:1,0)$.  Thus, the stabilizer of $(p,C)$ must consist solely of pairs of diagonal matrices.  A quick check shows that the stabilizer of $(p,C)$ is the one-parameter subgroup with weights $(-1,1,-2,2)$,
which is isomorphic to $\mathbb{C}^*$.  Again, the conclusion follows from Theorem \ref{Thaddeus}.

\end{proof}

Our main interest is the case where $d=3$.  As above, this is because in this case $X //_{t_0 - \epsilon} G$ is a birational model for $\overline{M}_{4,1}$.  In particular, we have the following:

\begin{proposition}

There is a birational contraction $\beta : \overline{M}_{4,1} \dashrightarrow X //_{t_0 - \epsilon} G$.

\end{proposition}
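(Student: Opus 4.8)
The plan is to imitate the proof of the corresponding genus $3$ proposition. It suffices to produce an open set $V \subseteq X //_{t_0 - \epsilon} G$ whose complement has codimension $\geq 2$, together with a morphism $\beta^{-1} : V \to \overline{M}_{4,1}$ that is an isomorphism onto a dense subset; then $\beta := (\beta^{-1})^{-1}$ is birational, and since $\beta^{-1}$ is a morphism away from codimension $2$ it contracts no divisor of $X //_{t_0 - \epsilon} G$, so $\beta$ is a birational contraction.

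First I would let $U \subseteq X^{ss}(t_0 - \epsilon)$ be the locus of pointed curves $(p,C)$ for which $C$ is moduli stable, and set $V = U //_{t_0 - \epsilon} G$. To see that $V$ is big, I would run the argument from the genus $3$ case verbatim: the non-moduli-stable locus $X \setminus U$ is strictly contained in the discriminant $\Delta \subseteq X$ of singular $(3,3)$ curves, which is an irreducible hypersurface whose generic point is an irreducible one-nodal curve (which is moduli stable). Since there are GIT-stable points both in $X \setminus \Delta$ and in $\Delta \cap U$, the inclusions $(X \setminus U)//G \subsetneq \Delta // G \subsetneq X //_{t_0 - \epsilon} G$ are strict, whence the complement of $V$ has codimension $\geq 2$.

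Next, the family $U \to Z$ consists of moduli stable curves, so the universal property of $\overline{M}_{4,1}$ yields a canonical map $U \to \overline{M}_{4,1}$; it is $G$-equivariant and hence descends to $\beta^{-1} : V \to \overline{M}_{4,1}$. The geometric input replacing ``every plane quartic is canonical'' is adjunction on $Y = \PP^1 \times \PP^1$: a smooth curve $C$ of bidegree $(3,3)$ has $K_C = \OO_Y(1,1)\vert_C$ of degree $6$ with $h^0 = 4$, so $\OO_Y(1,1)\vert_C$ is the canonical bundle and the Segre embedding realizes the canonical model of $C$ on the quadric $Y \subset \PP^3$. Consequently two smooth $(3,3)$ curves are abstractly isomorphic if and only if their canonical models are projectively equivalent, i.e.\ if and only if they differ by an automorphism of $Y$. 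Since the generic genus $4$ curve is non-hyperelliptic with canonical model on a \emph{smooth} quadric, the image of $\beta^{-1}$ is dense, so $\beta$ is birational; it then remains to deduce injectivity of $\beta^{-1}$ from the automorphism statement.

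The main obstacle is precisely this automorphism-matching step. A smooth $(3,3)$ curve carries two $g^1_3$'s, cut out by the two rulings of $Y$, and an abstract isomorphism may exchange them; the induced automorphism of $Y$ then involves the involution swapping the two factors, which is \emph{not} contained in $PGL_2 \times PGL_2$. I would therefore need to confirm that $G$ realizes this swap (equivalently, that a $(3,3)$ curve and its image under the factor-swap fall into a single $G$-orbit after passing to isomorphism classes); otherwise $\beta^{-1}$ would be generically two-to-one rather than birational. The remaining point is pure bookkeeping: the locus of curves whose canonical model lies on a quadric cone (where the two $g^1_3$'s collide) is a divisor in $\overline{M}_{4,1}$ on which $\beta^{-1}$ is undefined, but this merely means that $\beta$ contracts that divisor in the forward direction, which is permitted for a birational contraction and does not disturb the codimension-$\geq 2$ estimate for the complement of $V$.
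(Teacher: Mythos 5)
Your proposal follows the paper's proof essentially step for step: the same $U$ and $V = U //_{t_0 - \epsilon} G$, the same use of the irreducible discriminant hypersurface $\Delta$ to get the codimension $\geq 2$ bound, the same appeal to the universal property of $\overline{M}_{4,1}$, and the same use of adjunction ($K_C = \OO_Y (1,1) \vert_C$ for a smooth $(3,3)$ curve) to convert abstract isomorphisms into automorphisms of $Y$. The one step you leave open -- whether $G$ contains the factor-swap involution of $Y = \PP^1 \times \PP^1$ -- is not a pedantic worry; it is exactly the crux. You are right that if $G$ were only the connected group $PGL_2 \times PGL_2$, the proposition would fail: for a general genus $4$ curve, the two embeddings given by the ordered pair of distinct $g^1_3$'s are abstractly isomorphic pointed $(3,3)$ curves, and an element of the connected group matching them would force an automorphism of the curve exchanging the two $g^1_3$'s, which a general curve (with trivial automorphism group) does not have; so $U // (PGL_2 \times PGL_2) \to \overline{M}_{4,1}$ would be generically two-to-one and $\beta$ would not be birational.

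The resolution is that the group acting is meant to be the full automorphism group of the quadric, $PO(4,\mathbb{C}) \cong (PGL_2 \times PGL_2) \rtimes \mathbb{Z}/2$, which does contain the swap. The paper writes $PSO(4,\mathbb{C})$, which strictly denotes only the identity component, but Section 3 makes the intention unambiguous: there it is observed that $W'_{m_1 , m_2}$ \emph{cannot} be $G$-invariant because of the natural involution of $\PP^1 \times \PP^1$, and the sections are therefore symmetrized to $W_{m_1 , m_2} = W'_{m_1 , m_2} \otimes W'_{m_2 , m_1}$ -- a step that is meaningful only if the swap lies in $G$. Granting this, your canonical-embedding argument closes the gap: an abstract isomorphism of pointed smooth $(3,3)$ curves induces a projective automorphism of $\PP^3$ preserving the quadric, hence an element of $PO(4,\mathbb{C}) = G$, so $\beta^{-1}$ is injective on $V$ and is an isomorphism onto its image, exactly as in the paper. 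Note also that enlarging the group from $PSO(4)$ to $PO(4)$ costs nothing in the earlier stability analysis, since the numerical criterion is checked on one-parameter subgroups, all of which lie in the identity component. Finally, your remark about the quadric-cone (vanishing theta-null) locus is correct and harmless: a birational contraction only requires that $\beta^{-1}$ contract no divisor of $X //_{t_0 - \epsilon} G$, not that $\beta$ be defined or injective on all of $\overline{M}_{4,1}$; indeed the paper's subsequent theorem shows that this locus (the Petri divisor) is contracted by the composite map, which is the desired behavior.
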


\begin{proof}

As above, it suffices to exhibit a morphism $\beta^{-1}: V \to \overline{M}_{4,1}$, where $V \subseteq X //_{t_0 - \epsilon} G$ is open with complement of codimension $\geq 2$ and $\beta^{-1}$ is an isomorphism onto its image.  Again, we let $U \subseteq X^{ss}(t_0 - \epsilon)$ be the set of all moduli stable pointed curves $(p,C) \in X^{ss}(t_0 - \epsilon)$.  The proof in this case is exactly like that in the case of $\PP^2$, as the discriminant locus $\Delta \subseteq X$ is again an irreducible $G$-invariant hypersurface.

By the universal property of the moduli space, since $U \to Z$ is a family of moduli stable curves, it admits a unique map $U \to Z \to \overline{M}_{4,1}$.  This map is certainly $G$-equivariant, so it factors uniquely through a map $U //_{t_0 - \epsilon} G \to \overline{M}_{4,1}$.  Since every curve of bidegree $(3,3)$ on $\PP^1 \times \PP^1$ is canonical, two such curves are isomorphic if and only if they differ by an automorphism of $\PP^1 \times \PP^1$.  It follows that this map is an isomorphism onto its image.

\end{proof}

\begin{theorem}
There is a birational contraction of $\overline{M}_{4,1}$ contracting the pointed Brill-Noether divisor $BN^1_{3,(0,2)}$.  Moreover, if $P$ is the Petri divisor, then the divisors $BN^1_{3,(0,2)}$, $P$, $\Delta_1$, $\Delta_2$, and $\Delta_3$ span a simplicial face of  $\overline{NE}^1 ( \overline{M}_{4,1} )$.
\end{theorem}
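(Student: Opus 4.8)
The plan is to mirror the genus-3 argument and invoke the Contraction Theorem of \cite{Rulla}: the exceptional divisors of the birational contraction
$$ \overline{M}_{4,1} \dashrightarrow X //_{t_0 - \epsilon} G \to X //_{t_0} G $$
span a simplicial face of $\overline{NE}^1 ( \overline{M}_{4,1} )$. It therefore suffices to show that each of the five listed divisors is contracted by this composition; since any subset of the generators of a simplicial cone spans a face which is again simplicial, these five will then span a simplicial face.

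First I would dispose of $BN^1_{3,(0,2)}$. Its image under $\beta$ is birationally the Hessian $H_{0,1}$: a $g^1_3$ on a genus-4 curve on $\PP^1 \times \PP^1$ is cut out by one of the two rulings, and the pointed vanishing sequence $(0,2)$ at $p$ says precisely that the ruling fiber through $p$ meets $C$ to order at least $2$, which is the vanishing condition for $W_{0,1}$. By the Corollary above, the wall-crossing map $X //_{t_0 - \epsilon} G \to X //_{t_0} G$ contracts $H_{0,1} \backslash H_{1,1}$ to a point, so $BN^1_{3,(0,2)}$ is contracted by the composition.

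Next I would treat the boundary divisors and the Petri divisor, all of which are already contracted by $\beta$ itself. The key geometric input is that every smooth $(3,3)$ curve on $\PP^1 \times \PP^1$ is a canonically embedded genus-4 curve lying on the smooth quadric, hence is irreducible and carries two distinct $g^1_3$'s. Consequently: (i) the reducible curves parametrized by $\Delta_1, \Delta_2, \Delta_3$ never occur as smooth members, and since the discriminant $\Delta \subseteq X$ is an irreducible hypersurface whose generic point is an irreducible one-nodal curve (the image of $\Delta_0$), these reducible loci form a sublocus of codimension $\geq 2$; hence $\Delta_1, \Delta_2, \Delta_3$ are contracted. (ii) The generic point of the Petri divisor $P$ is a smooth curve whose canonical model lies on the quadric cone and therefore carries a \emph{unique} $g^1_3$; such a curve is not isomorphic to any semistable $(3,3)$ curve on $\PP^1 \times \PP^1$. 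Thus $P$ meets the image of $\beta^{-1}$ only along singular curves, and since singular Petri curves form a locus of codimension $\geq 2$ in $\overline{M}_{4,1}$ while $\beta^{-1}$ embeds only an open subset of $\Delta$, the image $\beta (P)$ has codimension $\geq 2$ and $P$ is contracted. This is the exact analogue of the treatment of the hyperelliptic divisor $BN^1_2$ in the genus-3 theorem.

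The main obstacle I anticipate is the Petri divisor, which requires making precise the dictionary between this GIT model and the classical geometry of canonical genus-4 curves: that smooth $(3,3)$ curves correspond exactly to curves on the smooth quadric (two $g^1_3$'s), that the Petri (theta-null) divisor corresponds to curves on the quadric cone (one $g^1_3$), and, crucially, the codimension count showing that singular Petri curves sit in codimension $\geq 2$, so that $P$ cannot dominate the discriminant $\Delta$. Once this dictionary is in place, the identification of $H_{0,1}$ with $BN^1_{3,(0,2)}$ and the contraction of the reducible boundary divisors $\Delta_i$ are comparatively routine.
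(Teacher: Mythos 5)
Your proposal is correct and follows essentially the same route as the paper: the composition $\overline{M}_{4,1} \dashrightarrow X //_{t_0 - \epsilon} G \to X //_{t_0} G$ is the birational contraction, the pointed Brill-Noether divisor is contracted via its identification with $H_{0,1}$ and the wall-crossing corollary, and the $\Delta_i$'s and $P$ are handled by observing that smooth $(3,3)$ curves are Petri general, so these divisors map into the irreducible discriminant $\Delta$ (whose generic point is an irreducible nodal curve) in codimension $\geq 2$. Your extra details -- the ruling/tangency dictionary for $BN^1_{3,(0,2)}$ and the quadric-cone description of Petri-special curves -- are exactly the facts the paper leaves implicit, and your appeal to Rulla's Contraction Theorem matches the paper's framework from the introduction.
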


\begin{proof}

The composition $\overline{M}_{4,1} \dashrightarrow X //_{t_0 - \epsilon} G \to X //_{t_0} G$ is a birational contraction.  By the above, the given pointed Brill-Noether divisor is contracted by this map, so it suffices to show that $P$ and the $\Delta_i$'s are contracted as well.  The proof is again the same as the $\PP^2$ case.  We note that every smooth curve in $X$ is Petri general, so the closure of the image of each of these divisors must be contained in the singular locus $\Delta$, which is an irreducible hypersurface.  Since the generic point of $\Delta$ is an irreducible nodal curve, we see that the closure of the image of $\Delta_i$ is of codimension 2 or greater for all $i \geq1$.  Moreover, since the set of singular curves with semi-canonical pencils has codimension at least 2 in $\overline{M}_{4,1}$, and we have constructed an embedding of an open subset of $\Delta$ into $\overline{M}_{4,1}$, we may conclude that $P$ is contracted as well.

\end{proof}

\bibliography{ref}

\end{document}